\documentclass[12pt]{amsart}
\usepackage{amsmath, amssymb, amsthm, latexsym}
\usepackage{amssymb,amscd,amsmath}
\usepackage{latexsym}
\usepackage[center]{caption}
\usepackage{tikz}
\newcounter{braid}
\newcounter{strands}

\DeclareMathAlphabet{\bsf}{OT1}{cmss}{bx}{n}

\pgfkeyssetvalue{/tikz/braid height}{1cm}
\pgfkeyssetvalue{/tikz/braid width}{1cm}
\pgfkeyssetvalue{/tikz/braid start}{(0,0)}
\pgfkeyssetvalue{/tikz/braid colour}{black}
\pgfkeys{/tikz/strands/.code={\setcounter{strands}{#1}}}

\makeatletter
\def\cross{%
  \@ifnextchar^{\message{Got sup}\cross@sup}{\cross@sub}}

\def\cross@sup^#1_#2{\render@cross{#2}{#1}}

\def\cross@sub_#1{\@ifnextchar^{\cross@@sub{#1}}{\render@cross{#1}{1}}}

\def\cross@@sub#1^#2{\render@cross{#1}{#2}}

\def\render@cross#1#2{
  \def\strand{#1}
  \def\crossing{#2}
  \pgfmathsetmacro{\cross@y}{-\value{braid}*\braid@h}
  \pgfmathtruncatemacro{\nextstrand}{#1+1}
  \foreach \thread in {1,...,\value{strands}}
  {
    \pgfmathsetmacro{\strand@x}{\thread * \braid@w}
    \ifnum\thread=\strand
    \pgfmathsetmacro{\over@x}{\strand * \braid@w + .5*(1 - \crossing) * \braid@w}
    \pgfmathsetmacro{\under@x}{\strand * \braid@w + .5*(1 + \crossing) * \braid@w}
    \draw[braid] \pgfkeysvalueof{/tikz/braid start} +(\under@x pt,\cross@y pt) to[out=-90,in=90] +(\over@x pt,\cross@y pt -\braid@h);
    \draw[braid] \pgfkeysvalueof{/tikz/braid start} +(\over@x pt,\cross@y pt) to[out=-90,in=90] +(\under@x pt,\cross@y pt -\braid@h);
    \else
    \ifnum\thread=\nextstrand
    \else
     \draw[braid] \pgfkeysvalueof{/tikz/braid start} ++(\strand@x pt,\cross@y pt) -- ++(0,-\braid@h);
    \fi
   \fi
  }
  \stepcounter{braid}
}

\tikzset{braid/.style={double=\pgfkeysvalueof{/tikz/braid colour},double distance=1pt,line width=2pt,white}}

\newcommand{\braid}[2][]{%
  \begingroup
  \pgfkeys{/tikz/strands=2}
  \tikzset{#1}
  \pgfkeysgetvalue{/tikz/braid width}{\braid@w}
  \pgfkeysgetvalue{/tikz/braid height}{\braid@h}
  \setcounter{braid}{0}
  \let\sigma=\cross
  #2
  \endgroup
}
\makeatother

\input xypic
\newtheorem{theorem}{Theorem}
\newtheorem{proposition}[theorem]{Proposition}

\newtheorem{lemma}[theorem]{Lemma}

\newtheorem{definition}[theorem]{Definition}

\makeatletter
\makeatother

\def\Z{\mathbb{Z}}

\def\Pi{\mathbb{P}^{\infty}}

\def\qed{\hfill$\square$\medskip}

\def\Zpk{\mathbb{Z}/p^{k}}
\def\Zpk1{\mathbb{Z}/p^{k-1}}

\newcommand{\rref}[1]{(\ref{#1})}

\newcommand{\beg}[2]{\begin{equation}\label{#1}#2\end{equation}}
\def\r{\rightarrow}

\def\sl2{\widetilde{SL_{2}(\Z)}}
\def\smin{\smallsetminus}

\def\smin{\smallsetminus}

\title[$D$-structures]{$D$-structures and Derived Koszul duality for unital operad algebras}
\author{Tyler Foster, Po Hu and Igor Kriz}


\begin{document}

\begin{abstract}
Generalizing a concept of Lipshitz, Ozsv\'{a}th and Thurs-ton from Bordered Floer homology,
we define $D$-structures on algebras of unital operads, which
can also be interpreted as a generalization of 
a seemingly unrelated concept of Getzler and Jones. This construction gives rise
to an equivalence of derived categories, which can be thought of as a unital
version of Koszul duality using non-unital Quillen homology. We also discuss a multi-sorted 
version of the construction, which provides a framework for unifying the known algebraic
contexts of Koszul duality.
\end{abstract}

\maketitle

\section{Introduction}

Koszul duality for operads and algebras over them was introduced in the landmark paper
\cite{gk} by Ginzburg and Kapranov. For example, as long as we work over a field of
characteristic $0$, the operads encoding commutative and Lie algebras are Koszul dual,
while the operad encoding associative algebras is self-dual. The Koszul duality of Ginzburg
and Kapranov \cite{gk} has both a non-derived and a derived version. The derived
version involves a kind of a bar construction on an operad, which we call the
{\em Ginzburg-Kapranov bar construction}. The
Ginzburg-Kapranov bar construction on an operad is a differential graded co-operad,
which, under suitable finiteness assumptions, is the dual of a differential graded operad,
the {\em derived Koszul dual} of the original operad. When there is not enough finiteness
to dualize, the DG co-operad can also be used directly, and we refer to it as the
{\em Koszul transform} of the original operad. 
Just as in the more classical Koszul duality of associative algebras (cf. \cite{p}), there is
a property of an operad being {\em Koszul} which implies that the Koszul dual operad is, in fact,
quasi-isomorphic to a (non-differential graded) operad, which is then called the
{\em non-derived Koszul dual}.

The concept of derived Koszul duality is (as always) more important to homotopy theory foundations
than the corresponding non-derived concept, although the non-derived
concept is a useful calculational tool. Derived Koszul dual operads
are a part of a more general scheme, which in some sense goes
back to Quillen \cite{qc}: In any based category $C$ with finite
products, we may define the category of abelian objects $Ab(C)$. In good (and also typical)
cases, the forgetful
functor
$$U:Ab(C)\r C$$
has a left adjoint $L$, called {\em abelianization}. The functor $LU$ is then
a co-monad in $Ab(C)$. In the presence of a mechanism
creating derived categories, if we denote by $M$ the derived version of 
the co-monad $LU$, then
the Koszul transform of the derived category $DC$ should be the category of 
$M$-co-algebras. This is, of course, somewhat vague, and it gets even more so:
Abelian objects may be replaced by $E_\infty$-objects, and ultimately, the functor
$L$ may be replaced by Dwyer-Kan stabilization (an anague of the topological notion
of spectra in a based category with certain additional structure, cf. \cite{dk}).
It is very difficult to get any precise theorems along these lines in the most general
case because of convergence problems in the associated cobar spectral sequence.
Nevertheless, it was observed by Kontsevich that derived Koszul duality for operads is a part of this
general scenario (see \cite{kontsevich,konso, hkv, hstring}).

One of the puzzles of Koszul duality has been that it does not seem to interact well with units.
The main problem is that unital (non-augmented) operads, algebras over such operads etc. do not tend to
form a based category, and therefore the scheme described in the last paragraph does
not work directly. (When discussing a unital context, we mean, similarly as \cite{qc}, that there is a unit but not an 
augmentation. Similarly as for algebras, the category of augmented unital operads is canonically
equivalent to the category of non-unital operads by passing to the augmentation ideal.) 
Furthermore, if one takes Quillen cohomology of unital objects in the
corresponding non-unital category, one usually gets zero (\cite{qc}).
It has been an open question whether there is a version of Koszul duality which works,
say, for unital algebras over unital operads, in a sense which would generalize the above categorical
scheme. 

The main result of the present paper is to define derived Koszul
duality for unital algebras over unital operads. Our approach is to take non-unital homology of the unital 
$\mathcal{C}$-algebra, which by \cite{kontsevich,konso} is calculated by the
Ginzburg-Kapranov bar construction. Even though these bar constructions have $0$ chain homology,
they have however additional structure (which we call {\em $D$-structure}). There is a
natural way of defining morphisms and equivalences of $D$-structures, and prove that
the resulting derived category is equivalent to the category of $\mathcal{C}$-algebras. Comment: 
while we do rigorously construct the derived category of $D$-structures
for a general unital operad, we do not construct a Quillen model structure \cite{q} for them.
Constructing such a structure remains an interesting open problem. 

The term $D$-structure is taken from the 
{\em bordered Floer homology} of Lipshitz, Oszv\'{a}th and Thurston \cite{o},
which is a (pointed)
topological quantum field theory built out of the Heegaard-Floer homology invented by
Ozsv\'{a}th and Szab\'{o} \cite{os}.
From the point of view of the present paper, they use our derived Koszul duality in the
special case of $A$-modules where $A$ is a unital algebra: both the modules and their
Koszul transform $D$-structures occur naturally as combinatorial objects calculating Bordered Floer homology.
There are a few technical nuances, for example, on one side, \cite{o} consider $A_\infty$-modules
instead of strict modules, and also consider the differential as one of
the operations. We replace this by a context most suitable to our techniques; the different
variants of the concepts, both on the level of modules and $D$-structure, are easily seen to lead to equivalent
derived categories. In any case, it is very important to \cite{o} to have an
{\em equivalence of derived categories}; they need to construct a pairing of two objects in a certain geometrically
given category, while a natural geometrically given pairing is between an object of the category and
an object of its Koszul transform. We reproduce a full generalization of this equivalence of categories
to the general context of unital algebras over unital operads.

Interestingly enough, $D$-structures turn out to be also equivalent to the concept of quasi-free (or almost free) operad
algebras introduced, in the augmented case, in 1994 by Getzler and Jones in their unpublished preprint 
\cite{gjones2} (see also Fresse \cite{fresse}). We only need to notice that the concept makes sense in the 
non-augmented case. Of course,
it is 
the more explicit $D$-structure version of the concept which occurs naturally in 
\cite{o}, and is naturally interpreted as a version of Koszul duality. 
The equivalence of both concepts is an easy but very pretty observation, a clean and simple pattern 
in what has become known as a somewhat messy and technical area.

Since many papers were written on this subject to date, and the basic background definitions are 
not  unified (or even always completely rigorous), we chose to make the present paper self-contained, and
to deal with a number of technical issues. Ginzburg and Kapranov \cite{gk} work
over a field of characteristic $0$, while Lipshitz, Ozsv\'{a}th and Thurston \cite{o} work over
a field of characteristic $2$. The motivation of \cite{o} is to avoid the discussion of signs.
In this paper, we do work out the signs, and in a sense, this is one of our key points: 
While in \cite{gk}, a clever exterior algebra method is introduced to
control the signs in the Ginzburg-Kapranov bar construction, we need to extend this construction
by a {\em Clifford algebra} to handle the differential graded case. This becomes important because
the derived Koszul transform of a DG operad is, in fact, naturally {\em bigraded}. We need to totalize
this into a singly graded chain complex. 
While there are standard signs for totalizing double chain complexes,
these signs {\em do not work for generalizing the $D$-structures of Lipshitz, Ozsv\'{a}th and Thurston \cite{o}
outside of characteristic $2$}! The Clifford algebra method we use  reconciles the signs
of the internal grading with the Ginzburg-Kapranov bar construction completely, and works
in the setting we need.

To make a rigorous treatment of the signs, we introduce our own rigorous definition of ``planar trees". Many previous
papers rely only on pictorial intuition, which, in some cases, even obscures the crucial point of the 
role of identifications introduced by symmetric group actions on operads.
This is related to the more general point that while we are interested in derived
categories, $D$-structure is a point-set level algebraic structure, and therefore for our purpose, 
objects cannot be freely replaced by quasiisomorphic ones (this is even more apparent when we use the
interpretation of \cite{gjones2}). This is one of the reasons
why we must use the Ginzburg-Kapranov bar construction \cite{gk};
we do not know how to make our theory replacing it with a 
two-sided bar construction of monads \cite{may}, (which, of course, could
be used if we worked only
up to homotopy). In some sense, \cite{gk} deal with the same issue in their original concept of
non-unital Koszul duality. 

A related point is that the reason Ginzburg-Kapranov \cite{gk} work in characteristic $0$ is
that otherwise the monad associated with an operad does not preserve quasiisomorphisms.
This happens in some very basic cases, for example for commutative algebras.
This is a very well recognized phenomenon which plays an important role in homotopy theory (cf. \cite{mt},
or, in an algebraic context, \cite{km}). In this paper, we work in the category of modules over an
arbitrary field, and impose a condition we call $\Sigma$-cofibrancy on the operad. This condition
also comes from \cite{mt,km}, and has of course been since used by other authors,
too. It is automatically
satisfied in characteristic $0$.

Finally, we would like to comment briefly on the relationship of our results with previous work,
in particular the recent results
of Hirsh and Mill\`es \cite{hm}. The main concept of \cite{hm} is that of a {\em properad},
which is a variant of previous notions of PROPs \cite{val} and dioperads \cite{gan}. PROPs, like operads, were first
introduced in the context of infinite loop space theory \cite{bv}. PROP's (or dioperads) themselves
are algebras over appropriate {\em multisorted} operads. A derived version of the
concept of {\em curvature} of \cite{hm} is then equivalent to a special case of (a multisorted version of) D-structure.
The use of the term curvature in this context was, in fact, coined by Getzler and Jones \cite{gjones}.
That paper provided the first clue why a notion of curvature may be Koszul dual to a unit. In some
sense, \cite{hm} generalize that approach.
While explicit comparisons with all existing work exceed the scope of the present paper, we do, in the Appendix, spell
out the changes needed to treat the multisorted context, thus providing a framework for treating derived
Koszul duality in all the known algebraic contexts. 

It may be useful to make a brief remark on extensions of Koszul duality beyond operads: In these cases (e.g. 
PROP's), one typically discusses Koszul duality only for the PROP's themselves, but {\em not for algebras}
over them. Such algebras may have both operations and co-operations (e.g. Hopf algebras), 
which causes them to behave badly 
from a categorical point of view. For example, the forgetful functor from such algebras into chain complexes
typically is neither a left nor a right adjoint. For that reason, those types of algebras do not easily fit the Quillen
framework \cite{qc}, and what can be said about Koszul duality in this case remains largely an open problem.

The present paper is organized as follows: Because of the delicate technical nature of 
our construction, and the presence of a large number of variants of similar concepts
in the literature, we found it necessary to make the present paper as self-contained
as possible, even at the cost of redefining some
known concepts, when there is ambiguity in them. In Section \ref{prelim}, we treat these necessary technical
prerequisites. In Section \ref{st}, we treat rigorously the notion of a {\em tree}. For us, trees
are ordered, or ``planar'' trees. Because of the sign issue, which is central to us, we also
choose to be more pedantic and rigorous than is customary in this context. 
In Section \ref{sbar}, we review the Ginzburg-Kapranov bar construction in the 
setting we need, and implement the relevant sign devices. In Section \ref{sd},
we introduce our version of the concept of a D-structure. In Section \ref{sder},
we construct the derived category of D-structures, and prove that it is equivalent to
the Quillen derived category of algebras over the original operad. In the Appendix, we treat the
multi-sorted case.

\section{Preliminaries}\label{prelim}
In this paper, we will work with unital operads $\mathcal{C}$  in the category $K$-$Chain$ of chain complexes of $K$-modules
where $K$ is a field. We will also call them {\em DG $K$-module operads}.
This means a sequence $\mathcal{C}(n)$ of chain complexes of $K$-modules,
together with an action of the permutation group $\Sigma_n$ on $\mathcal{C}(n)$, $n=0,1,2,\dots$ a
unit chain map 
$$K\r \mathcal{C}(1),$$
and an operation
\beg{eoperad1}{\mathcal{C}(n_1)\otimes\dots\otimes \mathcal{C}(n_k)\otimes \mathcal{C}(k)
\r \mathcal{C}(n_1+\dots+n_k),}
$$x_1\otimes\dots\otimes x_k\otimes x_{k+1}\mapsto
\gamma(x_1,\dots,x_k;x_{k+1})$$
satisfying the usual equivariance, associativity and unit axioms modelled on the example
$\mathcal{H}_M(n)=Hom(M^{\otimes n},M)$ where $M$ is a chain complex of $K$modules,
and $Hom$ denotes the internal Hom in the category of chain complexes of $K$-modules
(\cite{operad}). As usual, when dealing with graded objects, we apply the Koszul
sign
$$(-1)^{jk}$$
in the switch homomorphism between $x\otimes y$ and $y\otimes x$ for homogeneous elements
$x,y$ of degrees $j$, $k$, respectively. 

It may be more common to put the $\mathcal{C}(k)$ term first in the
tensor product \rref{eoperad1}. We chose the current order of variables because
we work in the context of trees, which we visualize as having roots in the bottom:
When writing the trees in one line, it seems natural to write the upper
parts of the tree to the left and the root to the right. It is, obviously, only a matter
of signs.

It is also useful to introduce the operation
$$\gamma_j:\mathcal{C}(k)\otimes\mathcal{C}(n)\r \mathcal{C}(n+k-1)$$
given by
$$\gamma_j(x,y)=\gamma(\underbrace{1,\dots,1}_\text{$j-1$ times},x,
\underbrace{1,\dots,1}_\text{$n-j$ times},y).$$
When considering non-unital operads (which we do not do in this paper, but which is,
for example, the basic setup of \cite{gk}), one usually does include the operations $\gamma_j$
in the definition: the operation $\gamma$ can be recovered from them, but not vice versa.

A morphism of operads is a chain map which preserves the operations $\gamma$, the unit
and the $\Sigma_n$-equivariances. By a {\em DG-$\mathcal{C}$-algebra $A$} we mean
a homomorphism of operads 
\beg{eoperad1+}{\mathcal{C}\r\mathcal{H}_{A}.}
This is equivalent data to having operations
$$\theta:\underbrace{A\otimes\dots\otimes A}_\text{$n$ times}\otimes \mathcal{C}(n)\r A$$
which satisfy the expected associativity, unit and equivariance conditions. Morphisms of $\mathcal{C}$-algebras
are chain maps which preserve all the operations.

We will also use the notion of a monad, which is a generalization of an operad (in the sense that for every operad,
there is an associated monad, and the algebras are the same). A monad in
a category $Q$ is a functor $C:Q\r Q$ together with a product $\mu:CC\r C$ and a unit
$\eta:Id\r C$ which satisfy associativity and unit axioms. A {\em $C$-algebra}
X consists of a natural transformation $\theta:CX\r X$ which satisfies an associativity
with respect to $\mu$ and a unit axiom with respect to $\eta$. Morphisms of $C$-algebras
are morphisms in $Q$ which commute with the operation $\theta$:
$$
\diagram
CX\dto_{Cf}\rto^{\theta}&X\dto^{f}\\
CX\rto^\theta & X.
\enddiagram
$$

For an operad $\mathcal{C}$, there is a canonical monad $C$ such that $\mathcal{C}$-algebras
are the same as $C$-algebras (they form canonically equivalent categories). The
monad is given by
\beg{emonad}{CX=\bigoplus_{n\geq 0} X^{\otimes n}\otimes_{\Sigma_n} \mathcal{C}(n)}
where the symmetric group acts on $X^{\otimes n}$ by permutations (with the Koszul
signs). 

In the category of chain complexes of $K$-modules, by an {\em equivalence} we mean
a chain map which induces an isomorphism in homology. If $X$ is a chain complex, $X[n]$ denotes
the chain complex $X$ with degrees (in topology often referred to as 
{\em dimensions}) shifted by $n$: $X[n]_k=X_{k-n}$.

In important point is that for a general operad, when $f:X\r Y$ is an equivalence,
$Cf:CX\r CY$ may not be an equivalence. In fact, this has less to do with the notion
of an operad than with structure of chain complexes of $K[\Sigma_n]$-modules. 
To illustrate the problem, let $I[\Sigma_n]$ denote the chain complex of $K[\Sigma_n]$-modules
where 
$$I[\Sigma_n]_i= \begin{array}[t]{ll}
K[\Sigma_n] &\text{for $i=0,1$}\\[2ex]
0 &\text{else,}
\end{array}$$
where the differential is the identity. Then we have an obvious chain map of $K[\Sigma_n]$-modules
$\epsilon:K[\Sigma_n]\r I[\Sigma_n]$. (An ungraded module, when considered graded without
further specification, is considered to be in dimension $0$). 

Suppose we are given chain complexes $X_{(k)}$, $k\in \Z_{\geq -1}$,
where $X_{(-1)}=0$, 
sets $I_k$, $k\in \Z_{\geq 0}$, together with maps $q_k:I_k\r\Z$ and chain maps of $K[\Sigma_n]$-modules
$$f_k:\bigoplus_{i\in I_k} K[\Sigma_n][q_k(i)]\r X_{(k-1)}$$
such that $X_{(k)}$ is a pushout of the diagram
$$\diagram
\displaystyle\bigoplus_{i\in I_k}  K[\Sigma_n][q_k(i)]\rto^(.6){f_k}
\dto^{\bigoplus \epsilon[q_k(i)]} 
&
X_{(k-1)}\\
\displaystyle\bigoplus_{i\in I_k}  I[\Sigma_k][q_k(i)]. &
\enddiagram
$$
A chain complex of $K[\Sigma_n]$-modules $X$ is called {\em cell} if it is of the form
$$X\cong \text{co}\lim_{k} X_{(k)}$$
where $X_{(k)}$ are as above. (In particular, each $X_{(k)}$ is cell.)

We will call a chain complex of $K[\Sigma_n]$-modules {\em $\Sigma$-cofibrant} if it is
a retract of a cell chain complex of $K[\sigma_n]$-modules. (Note that when $K$
is a field of characteristic $0$, the assumption is automatically satisfied and hence
vacuos.)
We will call an DG $K$-module operad $\mathcal{C}$ {\em $\Sigma$-cofibrant} if each $\mathcal{C}(n)$ is an 
$\Sigma$-cofibrant
chain complex of $K[\Sigma_n]$-modules. The main point of considering these notions is
the following

\begin{proposition}
\label{psc}
Let $X$ be an $\Sigma$-cofibrant chain complex of $K[\Sigma_n]$-modules. Then the functor
$$X\otimes_{K[\Sigma_n]}?:\text{$K[\Sigma_n]$-modules}\r\text{$K[\Sigma_n]$-modules}$$
preserves equivalences. Consequently, if $\mathcal{C}$ is an $\Sigma$-cofibrant DG $K$-module
operad, then the associated monad
$$C:\text{chain complexes of $K$-modules}\r\text{chain complexes of $K$-modules}$$
preserves equivalences.
\end{proposition}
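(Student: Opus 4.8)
The plan is to prove the statement about the functor $X\otimes_{K[\Sigma_n]}?$ first, and then deduce the statement about the monad $C$ from it together with the K\"unneth theorem over the field $K$. For the functor, I would begin by reducing to the case in which $X$ is a \emph{cell} chain complex. Since $X$ is $\Sigma$-cofibrant it is a retract of some cell complex $Y$; homology is a functor, so for any equivalence $g\colon M\r N$ the map $X\otimes_{K[\Sigma_n]}g$ is a retract (in the category of chain maps) of $Y\otimes_{K[\Sigma_n]}g$, and a retract of a quasi-isomorphism is a quasi-isomorphism. Hence it suffices to treat the case $X=Y$ cell.

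For $X$ cell, write $X=\operatorname{colim}_k X_{(k)}$. Because $?\otimes_{K[\Sigma_n]}M$ is a left adjoint it commutes with this filtered colimit, and because homology commutes with filtered colimits of chain complexes, it is enough to show that each $X_{(k)}\otimes_{K[\Sigma_n]}g$ is an equivalence. I would prove this by induction on $k$, the case $k=-1$ being trivial since $X_{(-1)}=0$. The key observation is that $\epsilon\colon K[\Sigma_n]\r I[\Sigma_n]$ is a degreewise split monomorphism whose cokernel is $K[\Sigma_n][1]$ (with zero differential), so the defining pushout exhibits $X_{(k-1)}\r X_{(k)}$ as a monomorphism with cokernel $\bigoplus_{i\in I_k}K[\Sigma_n][q_k(i)+1]$. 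This cokernel is degreewise free over $K[\Sigma_n]$, so the short exact sequence
$$0\r X_{(k-1)}\r X_{(k)}\r \bigoplus_{i\in I_k}K[\Sigma_n][q_k(i)+1]\r 0$$
is degreewise split and stays exact after applying $?\otimes_{K[\Sigma_n]}M$, the quotient term becoming $\bigoplus_i M[q_k(i)+1]$. Comparing the resulting long exact homology sequences for $M$ and for $N$, the inductive hypothesis controls the $X_{(k-1)}$ term while the shifted, summed copies of $g$ control the quotient term, so the five lemma gives that $X_{(k)}\otimes_{K[\Sigma_n]}g$ is an equivalence, completing the induction.

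For the monad, recall $CX=\bigoplus_{n\geq 0}X^{\otimes n}\otimes_{\Sigma_n}\mathcal{C}(n)$, so that $Cf=\bigoplus_n\bigl(f^{\otimes n}\otimes_{\Sigma_n}\operatorname{id}\bigr)$. Since homology commutes with direct sums, it suffices to show each summand is an equivalence. Given an equivalence $f\colon X\r Y$, the K\"unneth theorem over the field $K$ identifies $H_*(f^{\otimes n})$ with $(H_*f)^{\otimes n}$, which is an isomorphism; hence $f^{\otimes n}\colon X^{\otimes n}\r Y^{\otimes n}$ is an equivalence, and it is automatically $\Sigma_n$-equivariant, i.e. an equivalence of chain complexes of $K[\Sigma_n]$-modules. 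As $\mathcal{C}$ is $\Sigma$-cofibrant, $\mathcal{C}(n)$ is a $\Sigma$-cofibrant $K[\Sigma_n]$-complex, and applying the first statement with $\mathcal{C}(n)$ in the fixed slot (the symmetric form of the functor, which follows by the same argument or via the self-duality $K[\Sigma_n]\cong K[\Sigma_n]^{\mathrm{op}}$, $g\mapsto g^{-1}$) shows that $f^{\otimes n}\otimes_{\Sigma_n}\mathcal{C}(n)$ is an equivalence. Summing over $n$ finishes the proof.

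The main obstacle is the inductive step of the first statement: one must verify that the pushout genuinely yields a short exact sequence of complexes whose quotient is degreewise projective over $K[\Sigma_n]$, so that exactness survives the functor $?\otimes_{K[\Sigma_n]}M$, which is \emph{not} exact in general. This is precisely where $\Sigma$-cofibrancy is used, and it is the point at which a naive argument would fail in positive characteristic. The field hypothesis itself enters only in the second statement, through the K\"unneth identification of $H_*(f^{\otimes n})$, which guarantees that $f^{\otimes n}$ is an equivalence before we tensor down over $\Sigma_n$.
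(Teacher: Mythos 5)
The paper offers no proof of this proposition at all: it is stated as known, with the $\Sigma$-cofibrancy condition attributed to \cite{mt,km}, and the text moves on immediately to Section 3. Your argument is correct and is precisely the standard proof that the paper's definitions are set up for: the retract reduction, the cell induction via the degreewise split short exact sequences with free quotients (correctly identified as the point where $\Sigma$-cofibrancy is needed and where a naive argument fails in positive characteristic), passage to the filtered colimit, and the K\"unneth argument over the field $K$ for the monad statement, including the left/right module switch $g\mapsto g^{-1}$ needed to place $\mathcal{C}(n)$ in the fixed slot of the functor from the first part. I see no gaps.
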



\section{Trees}
\label{st}

In this section, we will rigorously define, and describe basic operations on what is usually
referred to as ``planar trees''. Roughly speaking, they are (finite) rooted trees where the set of vertices
which are immediately above
each given vertex (i.e. are attached to it by an edge)
come with a specified linear ordering. This determines a linear ordering on the entire
set of vertices of the tree. 
In the context of the present paper, where there is need for extra sensitivity regarding signs,
we felt compelled to be perhaps more rigorous about this concept than is usually customary, and
write everything down ``algebraically'', without using pictures. 
In connection with this, we should, of course, note that mild variations in the concept are possible,
for example, the ordering could be reversed. One variation which could be considered more substantial
is that we distinguish between ``leaves'' and ``non-leaf vertices of valency $0$''. This is
because of the fact that we work with operads $\mathcal{C}$ and $\mathcal{C}$-algebras $A$
where we do not require $\mathcal{C}(0)=0$:
elements of $\mathcal{C}(0)$ are then attached to non-leaf vertices of valency $0$, while
elements of a $\mathcal{C}$-algebra $A$ are attached to leaves.

\vspace{3mm}

Denote $\bsf{n}=\{1,\dots,n\}$, $|S|$ for a (finite) set $S$ will denote the cardinality of $S$.

\begin{definition}

{\em In this paper, a {\em tree} $(n,s,L)$ is the following data: a subset $L\subseteq \bsf{n}$ 
(called the set of {\em leaves}) and map 
$$s:\bsf{n-1}\;(=\bsf{n}\smallsetminus\{n\})\r\bsf{n}\smin L$$
such that
\begin{enumerate}
\item $s(x)>x$ for all $x$

\item $x\leq y<s(x)\Rightarrow s(y)\leq s(x)$.

\end{enumerate}
For $i\in \bsf{n}$, the number $v_s(i)=|s^{-1}(i)|$ will be called the {\em valency} of $i$. Therefore,
for $i\in L$, we have $v(i)=0$.
}

\end{definition}
\vspace{3mm}

To interpret this definition in terms of usual planar trees, every $x$ is connected to $s(x)$ by an
edge; the vertex $s(x)$ is ``below'' the vertex $x$ in the planar tree. Therefore, the root is 
the greatest element, $n$.

\vspace{3mm}

\begin{lemma}
\label{l1}
Let $(n,s,L)$ be a tree, $n\notin L$ (note that $n\in L$ is only possible if
$n=0$) and
let 
$$\{k_1<k_2<\dots<k_m\}=s^{-1}(n).$$
Then we have $k_{m}=n-1$. Putting $k_0=0$, $n_i=k_i-k_{i-1}$, $i=1,\dots,m$,
$(n_i,s_i,L_i)$ are trees where
$$s_i(j)=s(j+k_{i-1})-k_{i-1},$$
$$L_i=\{x-k_{i-1}\;|\; x\in L\}\cap \bsf{n_i}.$$
\end{lemma}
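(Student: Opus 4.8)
The plan is to prove the two assertions essentially independently, with the second resting on an interval (``block'') decomposition of $\bsf{n-1}$ induced by the children of the root.

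First, for the claim $k_m = n-1$: since $n\notin L$ and $n-1\in\bsf{n-1}$, the value $s(n-1)$ is defined, and axiom (1) gives $s(n-1)>n-1$, forcing $s(n-1)=n$, i.e. $n-1\in s^{-1}(n)$. As $s^{-1}(n)\subseteq\bsf{n-1}$ has every element at most $n-1$, and $n-1$ belongs to it, the largest element $k_m$ equals $n-1$.

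The heart of the argument is to show that each block $B_i=\{k_{i-1}+1,\dots,k_i\}$ is closed under $s$ away from its top element $k_i$. Precisely, I would prove that for $x$ with $k_{i-1}<x<k_i$ one has $k_{i-1}<s(x)\leq k_i$. The lower bound is immediate from axiom (1), since $s(x)>x>k_{i-1}$. For the upper bound --- which is the crux --- I would argue by contradiction: if $s(x)>k_i$, then since $x\leq k_i<s(x)$, axiom (2) applied with $y=k_i$ yields $s(k_i)\leq s(x)$; but $s(k_i)=n$ (as $k_i\in s^{-1}(n)$), forcing $s(x)=n$, whence $x\in s^{-1}(n)=\{k_1,\dots,k_m\}$. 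This contradicts the fact that the open interval $(k_{i-1},k_i)$ contains no $k_j$, the $k_j$ being consecutive in the list (the case $i=1$ uses $k_0=0$ and the minimality of $k_1$). This is the step I expect to be the main obstacle, since it is where the ordering axiom (2) and the maximality of the root value $s(k_i)=n$ must be combined.

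With the block structure in hand, the remaining verifications are routine translations under the shift $x\mapsto x-k_{i-1}$. For $j\in\bsf{n_i-1}$ the index $x=j+k_{i-1}$ lies in $(k_{i-1},k_i)$, so $s_i(j)=s(x)-k_{i-1}\in\bsf{n_i}$ is well defined; axiom (1) for $s_i$ follows from $s(x)>x$, and axiom (2) for $s_i$ transcribes axiom (2) for $s$ verbatim, since subtracting the constant $k_{i-1}$ preserves the relevant inequalities (and $j'<s_i(j)\leq n_i$ keeps $j'$ in the domain $\bsf{n_i-1}$). Finally I would check that $s_i$ avoids $L_i$: if $s_i(j)=x'-k_{i-1}\in L_i$ with $x'\in L$, then $s(j+k_{i-1})=x'\in L$, contradicting that $s$ takes values in $\bsf{n}\smin L$. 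The degenerate case $n_i=1$, occurring when $k_i=k_{i-1}+1$, produces the one-vertex tree with empty $s_i$, consistent with the definition.
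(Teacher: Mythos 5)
Your proof is correct. The paper in fact gives no proof of this lemma at all (it is stated as an evident, routine fact), so there is nothing to compare against; your argument supplies exactly the verification the authors leave to the reader, and its crux --- that $k_{i-1}<x<k_i$ forces $k_{i-1}<s(x)\leq k_i$, obtained by combining axiom (2) with $x\le k_i<s(x)$ and the maximality of $s(k_i)=n$ --- together with your checks that $s_i$ inherits axioms (1) and (2) under the shift and avoids $L_i$, is a complete and correct treatment, including the degenerate one-vertex blocks.
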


\qed

We call $(n_i,s_i,L_i)_i$ the sequence of
{\em successor trees} of $(n,s,L)$. Let $(n,s,L)$ be
a tree with sequence of successor trees 
$$(n_1,s_1,L_1),\dots (n_m,s_m,L_m).$$ 
Let
$\sigma:\bsf{m}\r\bsf{m}$ be a permutation. Then there is a unique
tree $(n,s^\sigma,L^\sigma)$ with sequence of successor trees 
$$(n_{\sigma(1)},s_{\sigma(1)},L_{\sigma(1)}),\dots,
(n_{\sigma(m)},s_{\sigma(m)},L_{\sigma(m)}).$$
Let $\sim$ be the smallest equivalence relation on trees such that
\begin{enumerate}
\item
$(n,s,L)\sim (n,s^\sigma,L^\sigma)$ for any permutation $\sigma$ applicable.
\item
If $(n,s,L)$, $(n^\prime,s^\prime,L^\prime)$ have sequences of successor trees
$$(n_1,s_1,L_1),\dots (n_m,s_m,L_m),$$
$$(n_1,s_1^\prime,L_1^\prime),\dots (n_m,s_m^\prime,L_m^\prime)$$
where $(n_i,s_i,L_i)\sim (n_i,s_i^\prime,L_i^\prime)$, $i=1,\dots,m$, then
$$(n,s,L)\sim (n^\prime,s^\prime, L^\prime).$$
\end{enumerate}

\vspace{3mm}

\begin{lemma}
\label{l2}
For trees $(n,s,L)$, $(n,s^\prime,L^\prime)$, we have
$(n,s,L)\sim (n,s^\prime,L^\prime)$ if and only if there exists a permutation
$\sigma:\bsf{n}\r\bsf{n}$ such that $\sigma(L)=L^\prime$ and
$s^\prime(\sigma(i))=\sigma(s(i))$ when applicable. 
\end{lemma}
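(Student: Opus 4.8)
The plan is to prove the two implications separately: the forward direction ($\sim$ implies the existence of $\sigma$) by a minimality argument, and the converse by induction on $n$. Write $\approx$ for the relation holding between $(n,s,L)$ and $(n',s',L')$ exactly when $n=n'$ and there is a permutation $\sigma:\bsf{n}\to\bsf{n}$ with $\sigma(L)=L'$ and $s'(\sigma(i))=\sigma(s(i))$ for all $i\in\bsf{n-1}$. Note first that any such $\sigma$ must fix the root $n$: if $\sigma(i)=n$ for some $i\in\bsf{n-1}$, then $s'(\sigma(i))=s'(n)$ is undefined whereas $\sigma(s(i))$ is defined, a contradiction; so by bijectivity $\sigma(n)=n$.

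Since $\sim$ is by definition the smallest equivalence relation closed under clauses (1) and (2) in its definition, for the forward implication it suffices to verify that $\approx$ is itself such a relation, whence $\sim\subseteq\approx$. Reflexivity, symmetry and transitivity of $\approx$ are clear (take $\mathrm{id}$, $\sigma^{-1}$, and composites). Clause (1) holds for $\approx$ because a reordering $s\mapsto s^\tau$ of the successor trees is realized by the block permutation that carries the $a$-th block $\{k_{a-1}+1,\dots,k_a\}$ of $(n,s,L)$ onto its new position, preserving the order within the block and fixing $n$. Clause (2) holds because, given permutations $\sigma_a$ witnessing $(n_a,s_a,L_a)\approx(n_a,s_a',L_a')$, the block-diagonal permutation acting as $\sigma_a$ on the $a$-th block and fixing $n$ witnesses $(n,s,L)\approx(n,s',L')$; here one uses that each $\sigma_a$ fixes the root $n_a$ of its block, by the observation above.

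For the converse, suppose $\sigma$ as in the statement is given, and induct on $n$, the cases $n\le 1$ being immediate. We already know $\sigma(n)=n$, so $\sigma$ restricts to a bijection $s^{-1}(n)\to(s')^{-1}(n)$; thus both roots have the same number $m$ of children and we obtain a permutation $\rho\in\Sigma_m$ via $\sigma(k_a)=k'_{\rho(a)}$. The crux is that $\sigma$ preserves ancestor chains $y,s(y),s^2(y),\dots$ (immediate by iterating the intertwining relation), hence carries the descendants of $k_a$ bijectively onto the descendants of $k'_{\rho(a)}$. By Lemma \ref{l1} these descendant sets are exactly the contiguous vertex sets of the successor trees, so $n_a=n'_{\rho(a)}$, and subtracting the offsets $k_{a-1}$ and $k'_{\rho(a)-1}$ turns the restriction of $\sigma$ into a permutation $\sigma_a$ witnessing $(n_a,s_a,L_a)\approx(n'_{\rho(a)},s'_{\rho(a)},L'_{\rho(a)})$.

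By the inductive hypothesis each such equivalence upgrades to $(n_a,s_a,L_a)\sim(n'_{\rho(a)},s'_{\rho(a)},L'_{\rho(a)})$. To finish, I apply clause (1) to reorder the successor trees of $(n,s',L')$ by $\rho$, obtaining a tree whose $a$-th successor tree is $(n'_{\rho(a)},s'_{\rho(a)},L'_{\rho(a)})$, and then clause (2) to this tree and $(n,s,L)$, whose successor trees now match index-by-index under $\sim$; transitivity and symmetry then give $(n,s,L)\sim(n,s',L')$. The main obstacle, and the one place where the planarity axiom (condition (2) in the definition of a tree) is used, is the middle step of the converse: one must know that the order-forgetting isomorphism $\sigma$ respects the decomposition into successor trees, i.e. that descendant sets coincide with the contiguous blocks of Lemma \ref{l1}. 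Without that axiom these blocks need not be intervals, $\rho$ need not be well defined, and the reduction to strictly smaller trees would break down. The remaining verifications are routine but require careful bookkeeping of the offsets $k_{a-1}$ and $k'_{\rho(a)-1}$.
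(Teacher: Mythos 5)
Your proof is correct. For comparison: the paper itself offers no argument for this lemma at all --- it is stated and immediately followed by the end-of-proof symbol, as with the other tree lemmas in Section \ref{st} --- so there is no written proof to measure yours against; what you have done is supply the details the authors treat as routine. Your strategy is the natural one given that $\sim$ is defined as a smallest equivalence relation: for the forward inclusion you verify that the ``intertwining'' relation is itself an equivalence relation closed under the two generating clauses, and for the converse you induct on $n$ using the successor-tree decomposition of Lemma \ref{l1}. Two points in your write-up deserve emphasis as genuinely non-cosmetic. First, your resolution of the phrase ``when applicable'': you read it as requiring $s^\prime(\sigma(i))=\sigma(s(i))$ for every $i\in\bsf{n-1}$, which forces $\sigma(n)=n$. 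This is the right call --- under the weaker reading (equality only when both sides happen to be defined) the lemma is actually false, e.g.\ the chain $1\to 2\to 3$ and the two-child corolla on $\bsf{3}$ with $L=\emptyset$ would be related by the cycle $\sigma(1)=3$, $\sigma(2)=1$, $\sigma(3)=2$. Second, your identification of where axiom (2) of the tree definition enters: it is exactly what makes descendant sets of the root's children into the contiguous intervals of Lemma \ref{l1}, so that $\sigma$ restricts blockwise and the induction can proceed. Both observations are implicit in the paper's conventions but nowhere spelled out, and your proof is a faithful and complete realization of what the \qed\ elides.
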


\qed

\vspace{3mm}
In the case of Lemma \ref{l2}, we call $\sigma$ the {\em intertwining permutation} $(n,s,L)\r (n,s^\prime,L^\prime)$.
Note that the permutation $\sigma$
may not be unique. Trees and intertwining permutations form a groupoid.

\vspace{3mm}
Next, we define edge contractions of trees.
Let $(n,s,L)$ be a tree, $n\notin L$ with sequence of successor trees
$$(n_1,s_1,L_1),\dots, (n_m,s_m,L_m).$$
Choose $j\in \bsf{m}$ with $n_j\notin L_j$ and let 
$$(p_1,t_1,M_1),\dots (p_q,t_q,M_q)$$
be the sequence of successor trees of $(n_j,s_j,L_j)$. Then there is a unique tree
$(n-1,s^{\circ}_{j}, L^{\circ}_{j})$ with sequence of successor trees
$$\begin{array}{l}(n_1,s_1,L_1),\dots,(n_{j-1},s_{j-1},L_{j-1}),\\
(p_1,t_1,M_1),\dots,(p_q,t_q,M_q),\\
(n_{j+1},s_{j+1},L_{j+1}),\dots,(n_m,s_m,L_m).
\end{array}$$
We define a tree $(n-1,s^\prime, L^\prime)$ inductively to be an
{\em edge contraction} of a tree $(n,s,L)$ if either 
$$(n-1,s^\prime,L^\prime)=(n-1,s^\circ_j,L^\circ_j)$$
for some $j$, or $(n,s^\prime, L^\prime)$ has sequence of successor trees
$$\begin{array}{l}(n_1,s_1,L_1),\dots,(n_{j-1},s_{j-1},L_{j-1}),\\
(n_j-1,s^\prime_{j},L^\prime_{j}),\\(n_{j+1},s_{j+1},L_{j+1}),
\dots, (n_m,s_m,L_m)
\end{array}$$
where $(n_j-1,s^\prime_{j},L^\prime_{j})$ is an edge contraction of
$(n_j,s_j,L_j)$. (The first case corresponds to contraction of an edge attached to the root,
the second case corresponds to contractions of other edges.)

\vspace{3mm}

\begin{lemma}
\label{l3}
For $j\in (\bsf{n-1})\smin L$, there exists a unique
edge contraction $(n-1,s^\prime,L^\prime)$ of the tree $(n,s,L)$
such that the map 
$$\tau=\tau_{s,j}:(\bsf{n-1})\r\bsf{n}$$
given by 
$$\tau(k)=\begin{array}[t]{ll}
k &\text{for $k<j$}\\
k+1 & \text{for $k\geq j$}
\end{array}$$
satisfies
$$
\tau(s^\prime(k))=
\begin{array}[t]{ll}
s\tau(k) & \text{if $s(k)\neq i$}\\
s(i) & \text{if $s(k)=i$}.
\end{array}
$$
Moreover, every edge contraction is obtained in this way. We call
$(n-1,s^\prime, L^\prime)$ the {\em edge
contraction of $(n,s,L)$ at $j$}.
\end{lemma}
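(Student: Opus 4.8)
\Proofs The plan is to induct on $n$, following the inductive definition of edge contraction and distinguishing cases according to whether the edge joining $j$ to its parent $s(j)$ is attached to the root. Throughout I would write $s^{-1}(n)=\{k_1<\dots<k_m\}$ and let $(n_i,s_i,L_i)$ be the successor trees of Lemma~\ref{l1}, so that the $i$-th successor tree occupies the block $\{k_{i-1}+1,\dots,k_i\}$ of original labels and has root $k_i$.

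I would dispose of uniqueness first, since it is formal: the map $\tau=\tau_{s,j}$ is the order-preserving injection $\bsf{n-1}\r\bsf{n}\smin\{j\}$, so the displayed compatibility equation prescribes $\tau(s'(k))$, and hence $s'(k)$ itself, for every $k$, while the same computation forces $L'=\{k\;|\;\tau(k)\in L\}$. Thus at most one tree $(n-1,s',L')$ can satisfy the condition; it remains to show that the tree so determined is in fact an edge contraction, which is the content of the existence argument below.

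For existence I would exhibit the combinatorial contraction and check it is an edge contraction in the inductive sense. In the root case $s(j)=n$ one has $j=k_i$ for a unique $i$, and the hypothesis $j\notin L$ gives $n_i\notin L_i$, so the contraction $(n-1,s^{\circ}_j,L^{\circ}_j)$ is defined; unwinding the successor trees of $(n_i,s_i,L_i)$ and the definition of $s^{\circ}_j$ shows that $\tau$ carries $s^{\circ}_j$ to $s$, the only change being that the former children of $k_i$ are reattached to $n=s(k_i)$, which is exactly the displayed equation. In the recursive case $s(j)\neq n$, the monotonicity axiom (2) underlying Lemma~\ref{l1} forces $j$ and $s(j)$ into a common block, so the edge is internal to the $i$-th successor tree; setting $j'=j-k_{i-1}\in(\bsf{n_i-1})\smin L_i$, the inductive hypothesis yields a unique $\tau_{s_i,j'}$-compatible contraction of $(n_i,s_i,L_i)$, and splicing it back into the list of successor trees produces $(n-1,s',L')$. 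Here I would verify that $\tau_{s,j}$ restricts to $k_{i-1}+\tau_{s_i,j'}(\,\cdot\,)$ on the modified block and to the evident shift elsewhere, so that the local compatibility equation assembles into the global one. Reading this same induction backwards gives the final clause: every root-case contraction is the contraction at the corresponding $j=k_i$, and every recursive-case contraction is, by induction, $\tau_{s_i,j'}$-compatible for some local $j'$, hence is the contraction at $j=j'+k_{i-1}$.

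The only real obstacle is the label bookkeeping in the recursive case: one must track how the global relabeling $\tau_{s,j}$ decomposes across the successor-tree blocks, confirm that the shifts $+k_{i-1}$ intertwine the local and global parent functions, and check the boundary behavior at the root of the modified successor tree, where the reattachment occurs. None of this is conceptually deep, but it is precisely the kind of index manipulation that the paper's deliberately picture-free formalism is meant to render unambiguous, so I would carry it out explicitly rather than appeal to a picture.
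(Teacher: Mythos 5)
Your overall strategy---induction on $n$ following the two clauses of the paper's inductive definition of edge contraction, with existence handled by the root/recursive case split and the ``moreover'' clause obtained by reading the induction backwards---is the natural argument, and the existence half is sound; in particular, your appeal to axiom (2) to force $j$ and $s(j)$ into a common successor-tree block (if $s(j)\neq n$ and $k_{i-1}<j\leq k_i$, then $j=k_i$ is impossible since $s(k_i)=n$, and $s(j)>k_i$ would give $n=s(k_i)\leq s(j)$, a contradiction) is exactly the right point. For what it is worth, the paper offers no proof of this lemma at all (it is stated with a tombstone, like Lemmas \ref{l1}, \ref{l2} and \ref{l4}), so there is no ``paper approach'' to compare yours against.

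However, your uniqueness paragraph has a genuine gap. The displayed compatibility equation constrains only $s'$; it says nothing whatsoever about $L'$, so it cannot ``force $L'=\{k\;|\;\tau(k)\in L\}$'' as you claim. Worse, without pinning $L'$ down the uniqueness assertion you are trying to prove is actually false. Concretely: take $n=4$, $s(1)=s(2)=s(3)=4$, $L=\{2\}$. The edge contractions at $j=1$ and at $j=3$ are the two \emph{distinct} trees $(3,s',\{1\})$ and $(3,s',\{2\})$ with the same parent map $s'(1)=s'(2)=3$, and both satisfy the displayed equation for $\tau_{s,1}$ (for $k=1,2$ one has $s(\tau_{s,1}(k))=4\neq 1$, and indeed $\tau_{s,1}(s'(k))=4$). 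So ``at most one tree can satisfy the condition'' does not hold. The repair is to treat the leaf-set compatibility as an additional requirement rather than a consequence: either read the lemma as also demanding $L'=\tau_{s,j}^{-1}(L)$ (exactly as the paper's definition of \emph{leaf} contraction explicitly demands $L'=\tau^{-1}(L)\cup\{i\}$), or prove, by the same induction you run for existence, that the contraction constructed at $j$ has leaf set $\tau_{s,j}^{-1}(L)$, and then state uniqueness relative to both conditions. This defect is partly inherited from the lemma as printed (whose displayed equation also contains typos: the ``$i$'' should be ``$j$'', and ``$s(k)$'' should be ``$s(\tau(k))$''), but a complete proof must address it explicitly rather than assert that the equation does the work.
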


\qed

We shall also use a particular {\em left inverse} $\rho=\rho_{s,j}$ of $\tau_{s,j}$ defined by
$$\tau\circ \rho = Id_{\bsf{n-1}},$$
$$\rho(j):=s(j).$$

\vspace{3mm}
A tree $(n,s,L)$ is called a {\em bush} if $L=\bsf{n-1}$. (In some texts, this is called a {\em corolla}.)
A tree $(n^\prime, s^\prime, L^\prime)$
is called a {\em leaf contraction} of a tree $(n,s,L)$ if there exist $1\leq i\leq j\leq n$, $j\notin L$, such that
$$s^{-1}(j)=\{i,i+1,\dots, j-1\}\subseteq L,
$$
$n^\prime=n-j+i$ and the map 
$$\tau=\tau_{s,i,j}:\bsf{n^\prime}\r\bsf{n}$$
given by 
$$\tau_{s,i,j}(k)=\begin{array}[t]{ll}
k & \text{for $k<i$}\\
k+j-i & \text{for $k\geq i$}
\end{array}
$$
satisfies
$$\tau(s^\prime(k))=s(\tau(k)) \;
\text{for all $k\in \bsf{n^\prime}$,}
$$
$$L^\prime =\tau^{-1}(L)\cup \{i\}.$$
We then call $(n^\prime,s^\prime, L^\prime)$ the {\em leaf contraction of $(n,s,L)$ at $(i,j)$}. Pictorially,
this corresponds to contracting a bush of all leaves attached to a vertex, which has no other successor trees, to a single point. Again,
we shall also use a left inverse $\rho=\rho_{s,i,j}$ of $\tau_{s,i,j}$ where
$$\rho\circ\tau=Id_{\bsf{n}^\prime},$$
$$\rho(k):=i \;\text{for $k\in\{i,i+1,\dots,j-1\}$}.$$

\vspace{3mm}

\begin{lemma}\label{l4}
Let $\sigma:\bsf{n}\r \bsf{n}$ be an intertwining permutation
$$\sigma:(n,s,L)\r (n,t,M).$$

1. Let $j\in (\bsf{n-1})\smin L$ Let $(n-1,s^\prime, L^\prime)$ resp. $(n-1,t^\prime, M^\prime)$
be the edge contractions of $(n,s,L)$ resp. $(n,t,M)$ at $j$ resp. $\sigma(j)$.
Then the unique permutation 
$$\sigma^\prime:\bsf{n-1}\r\bsf{n-1}$$
with
$$\tau_{t,\sigma(j)}\sigma^\prime=\sigma\tau_{s,j}$$
intertwines 
$$\sigma^\prime:(n-1,s^\prime, L^\prime)\r(n-1,t^\prime,M^\prime).$$

2. Let $(n^\prime, s^\prime, L^\prime)$ be a leaf contraction of $(n,s,L)$
at $(i,j)$. Then there exists a leaf contraction $(n^\prime, t^\prime,M^\prime)$
at $(\sigma(j)-j+i,\sigma(j))$ and the unique permutation 
$$\sigma^\prime:\bsf{n^\prime}\r\bsf{n^\prime}$$
which satisfies
$$\tau_{t,\sigma(j)-k+i}\sigma^\prime=\sigma\tau_{s,i,j}
$$
intertwines
$$\sigma^\prime:(n^\prime, s^\prime, L^\prime)\r (n^\prime, t^\prime, M^\prime).$$

\end{lemma}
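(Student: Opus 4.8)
The plan is to verify that the formulas defining $\sigma'$ in each case produce a genuine intertwining permutation, by pushing the defining relation $\tau_{t,\sigma(j)}\sigma' = \sigma\tau_{s,j}$ (resp. its leaf-contraction analogue) through the characterization of edge/leaf contractions given in Lemmas \ref{l3} and the leaf-contraction definition. The starting point is that $\sigma$ intertwines $(n,s,L)\to(n,t,M)$, meaning $\sigma(L)=M$ and $t(\sigma(k))=\sigma(s(k))$ when applicable. I would first establish that $\sigma'$ is well-defined: in part 1, since $\tau_{t,\sigma(j)}:\bsf{n-1}\to\bsf{n}$ is injective with image $\bsf{n}\smin\{\sigma(j)\}$, and $\sigma\tau_{s,j}$ has image $\sigma(\bsf{n}\smin\{j\})=\bsf{n}\smin\{\sigma(j)\}$, the composite $\tau_{t,\sigma(j)}^{-1}\sigma\tau_{s,j}$ is a well-defined bijection of $\bsf{n-1}$, and this is exactly $\sigma'$. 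The same image-counting argument works in part 2 once one checks that $(i,j)$ being a legal leaf-contraction locus for $s$ forces $(\sigma(j)-j+i,\sigma(j))$ to be legal for $t$; this uses that $\sigma$ sends $s^{-1}(j)=\{i,\dots,j-1\}\subseteq L$ to $t^{-1}(\sigma(j))$, a set which, being the fiber of an intertwined tree contained in $M=\sigma(L)$, must itself be a consecutive block of leaves of the required length $j-i$, with smallest element $\sigma(j)-j+i$.

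Next I would check the two intertwining conditions for $\sigma'$. For the leaf condition $\sigma'(L')=M'$, I would use the contraction recipes $L'=\tau_{s,j}^{-1}(L)$ (edge case) or $L'=\tau_{s,i,j}^{-1}(L)\cup\{i\}$ (leaf case), combine with $\sigma(L)=M$, and transport along the defining relation; the added basepoint $\{i\}$ in the leaf case maps to the basepoint of the contracted target under $\sigma'$ precisely because of the normalization $\rho(k):=i$ and the choice of new locus $\sigma(j)-j+i$. For the successor-map condition $t'(\sigma'(k))=\sigma'(s'(k))$, I would apply the injective map $\tau_{t,\sigma(j)}$ to both sides (it suffices to check equality after applying an injection) and then use Lemma \ref{l3}'s characterization $\tau(s'(k))$ equals either $s\tau(k)$ or $s(i)$ according to whether $s(k)\neq i$. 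The defining relation $\tau_{t,\sigma(j)}\sigma'=\sigma\tau_{s,j}$ lets me rewrite $\tau_{t,\sigma(j)}(t'(\sigma'(k)))$ in terms of $t$ applied to $\tau_{t,\sigma(j)}\sigma'(k)=\sigma\tau_{s,j}(k)$, and then the intertwining relation for $\sigma$ converts this into $\sigma$ applied to the corresponding $s$-expression, which matches the right-hand side after one more application of the contraction characterization and the relation.

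The main bookkeeping obstacle, and the step I expect to require the most care, is the case analysis hidden in the contraction characterizations: the formula for $\tau(s'(k))$ branches on whether $s(k)=i$ (the contracted edge), and one must verify that the branch taken on the source side corresponds, under $\sigma$, to the matching branch on the target side — i.e. that $s(k)=i$ holds exactly when $t(\sigma'(k)\text{-image})$ equals the correspondingly contracted index on the $t$ side. This is where the relation $\tau_{t,\sigma(j)}\sigma'=\sigma\tau_{s,j}$ does its real work, translating the ``contracted edge at $j$'' on the $s$ side to the ``contracted edge at $\sigma(j)$'' on the $t$ side. For part 2 there is the additional subtlety that the leaf-contraction locus shifts from $(i,j)$ to $(\sigma(j)-j+i,\sigma(j))$, and one must confirm this shift is consistent across the whole range $\{i,\dots,j-1\}$ using that $\sigma$ restricted to this consecutive block of leaves of $s$ lands in the corresponding block for $t$; here the ambiguity in $\sigma$ noted after Lemma \ref{l2} is harmless because the conclusion only asserts existence of \emph{some} intertwining $\sigma'$, and all the required identities are forced by the defining relation regardless of which representative $\sigma$ was chosen.
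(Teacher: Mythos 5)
Your proposal is correct. The paper gives no proof of this lemma at all---it is stated as a routine verification with only a q.e.d.\ mark---so your direct check is precisely the intended argument: define $\sigma'=\tau_{t,\sigma(j)}^{-1}\sigma\tau_{s,j}$ (legitimate since both composites have image $\bsf{n}\smin\{\sigma(j)\}$, and similarly in the leaf case), transport the contraction characterizations of Lemma \ref{l3} and of leaf contractions along this defining relation, and observe that the case split $s(\tau_{s,j}(k))=j$ vs.\ $s(\tau_{s,j}(k))\neq j$ on the source side matches, via injectivity of $\sigma$ and the intertwining identity, the split $t(\sigma\tau_{s,j}(k))=\sigma(j)$ vs.\ $\neq\sigma(j)$ on the target side. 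The one assertion that deserves its own short argument is the consecutive-block claim in part 2: that $t^{-1}(\sigma(j))=\sigma(\{i,\dots,j-1\})$ equals $\{\sigma(j)-j+i,\dots,\sigma(j)-1\}$ is not a formal consequence merely of its being a fiber contained in $M$; it requires axiom (2) of the tree definition, which implies that the full descendant set of any vertex $v$ is an interval ending at $v-1$, combined with the observation that all children of $\sigma(j)$ are leaves (images of leaves under $\sigma$) and hence exhaust its descendants, so that the fiber is forced to be an interval of length $j-i$ abutting $\sigma(j)$.
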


\qed

\vspace{3mm}

\section{The augmented Ginzburg-Kapranov bar construction}
\label{sbar}

\vspace{3mm}

\noindent
{\bf An explanation of signs:} Signs are well known to cause pains throughout algebraic topology. Traditionally, one 
deals with this by writing terms in certain order, and attaching a specific sign convention when order of elements is 
exchanged. The Koszul-Milnor signs, where an exchange of elements of degree $k$ and $\ell$ introduces a sign
of $(-1)^{k\ell}$, is well known. Still, in a more complicated setting, such as constructions indexed by trees, with 
multiple operations and differentials, sign conventions become complicated and their consistency is difficult to verify.
Ginzburg and Kapranov \cite{gk} introduced a very elegant solution, making their sign an element of a 
``determinant line" on a space generated by the edges. Tensoring with this line, and interpreting operators contracting
lines as left differentiations eliminates the need to discuss signs explicitly, as the determinant line is ``functorial",
and it is therefore 
immediate that omitting two lines in opposite order always reverses signs, no matter where the corresponding 
variables are written (by the functoriality, they may as well be written, say, in the first two places). 

While the present paper was originally written with explicit signs depending on the order of elements, and we had all the 
sign consistencies checked, the signs seemed artificial and not illuminating. Additionally, it was difficult to be honestly 
sure that there is no hidden mistake. Because of that, we very much wanted to use the Ginzburg-Kapranov 
determinant line for signs attached to contractions of edges, but there was one problem: The Ginzburg-Kapranov determinant does not handle differential graded objects with their own 
internal differential, since those require their own sign scheme (such as the Koszul-Milnor signs), and we must also
express how both of the two kinds of signs are compatible. 

Because of that, we searched for an appropriate receptacle
line for the internal 
sign of a tensor product of differential graded objects: we require naturality with respect to change of order of 
tensor factors, merging two factors by a "graded-commutative" product, and anticommutation
of internal differentials on two different tensor factors. An exterior algebra 
is clearly not appropriate for this purpose if we want the internal differential to be accompanied by 
a simple operator such as multiplication by a generator corresponding to the given tensor factor (or, alternately, 
differentiation - we ended up using multiplication). While the internal differential satisfies $dd=0$ on elements, 
this is not true on degrees: One must be able to switch back and forth between even and odd degrees using the same
operator. 

We then realized that these desiderata pretty much uniquely specify the moduls of monomials in a Clifford algebra, 
where the square of the sign element corresponding to each tensor factor is $1$:
one has the correct commutation relations, while the sign of an internal differential in a given factor is expressed
by multiplication by that factor. It is then plain to see that two internal differentials of different tensor factors 
automatically anti-commute, eliminating lengthy and artificial verifications. Formally, the sign-holding objects we introduce
are as follows:

\vspace{3mm}
For a tree $(n,s,L)$ and $i\in\bsf{n}$, denote
$$\epsilon(i)=\epsilon_{(n,s,L)}(i)=|\bsf{i}\smin L|.$$
Let $K$ be a field and let $\mathcal{C}$ be an $\Sigma$-cofibrant operad in the category of chain
complexes of $K$-modules (see the definitions in Section \ref{prelim} above).
Let $A$ be a $\mathcal{C}$-algebra in the category $K$-$Chain$ (again, see Section \ref{prelim}). 
We let 
$$\Lambda(n,s,L)$$
be a $K$-valued exterior algebra on indeterminates $e_i$, $i\in \bsf{n-1}\smallsetminus L$. Let
$$C_n$$
be a Clifford algebra on indeterminates $f_i$, $i\in \bsf{n}$, by which we mean the
factor algebra of the tensor algebra on the $f_i$'s by the relations
$$f_{i}^{2}=1,\; f_if_j=-f_jf_i \;\text{for $i\neq j$}.$$
(Although the elements $e_i$, $j_j$ carry sign information, we consider them to be in degree $0$.)
We also introduce a $K$-algebra structure on
$$\Lambda(n,s,L)\otimes C_n,$$
by setting
$$f_j\cdot e_i=-e_i\cdot f_j.$$

We shall write 
$$Det(n,s,L)$$
for the top degree summand of $\Lambda(n,s,L)$, i.e. the sub-$K$-module of
$\Lambda(n,s,L)$ spanned by
$$\prod_{i\in (\bsf{n-1})\smallsetminus L} e_{i}.$$
For $\epsilon=(\epsilon_1,\dots,\epsilon_n)\in (\Z/2)^n$, we denote by $C_{n,\epsilon}$ the
sub-$K$-module of $C_{n}$ spanned by
$$f_{1}^{\epsilon_1}\dots f_{n}^{\epsilon_n}.$$
For $\varepsilon\in\Z/2$, and a graded $K$-module $M$, denote by $M_\varepsilon$
the submodule spanned by all homogeneous elements of degrees equal to $\varepsilon$ $\mod 2$.

The {\em augmented bar construction} $\widetilde{B}_{\mathcal{C}}(A)$
is defined by
\beg{esbar+}{\begin{array}{l}\widetilde{B}_\mathcal{C}(A)=\\[2ex]
\displaystyle
\Bigg(\bigoplus_{(n,s,L)}\bigoplus_{\epsilon\in (\Z/2)^n}Det(n,s,L)\otimes\\[4ex]
\displaystyle
 C_\epsilon\otimes(
\bigotimes_{i\in L} A_{\epsilon_i}
\otimes\bigotimes_{\begin{array}[t]{c}i\in\bsf{n}\smin L\\v(i)=k\end{array}}\mathcal{C}(k)_{\epsilon_i}
)[\epsilon_{(n,s,L)}(n)]\Bigg)/\sim.\end{array}}
The direct sum is over all trees. The equivalence is the smallest congruence of $K$-modules satisfying, for
$$\sigma:(n,s,L)\r (n,s^\prime, L^\prime),$$
$$\begin{array}{l}
\displaystyle
\prod_{i} e_i\prod_j f_j\cdot
\bigotimes_{i\in L} (x_i\in A)\otimes\bigotimes_{\begin{array}[t]{c}i\in\bsf{n}\smin L\\v_s(i)=k\end{array}}
(x_i\in \mathcal{C}(k))\\[8ex]
\displaystyle
\sim
\prod_i e_{\sigma(i)}\prod_j f_{\sigma(j)}\cdot\bigotimes_{i\in L^\prime} (y_i\in A)\otimes
\bigotimes_{\begin{array}[t]{c}i\in\bsf{n}\smin L^\prime\\v_{s^\prime}(i)=k\end{array}}(y_i\in \mathcal{C}(k)).
\end{array}
$$
Note: since the algebras $\Lambda(n,s,L)$, $C_n$ are not commutative, the products are taken
in the order of indexing. The congruence relation identifies the summands corresponding to equivalent
trees in the sense of Section \ref{st}, i.e. with respect to permuting successor trees of a given vertex, with
the appropriate signs (note that permuting the exterior resp. Clifford generators introduces the sign
associated with putting them back in the original order).

The differential $d$ on $\widetilde{B}_\mathcal{C}(A)$ is the sum of the following kinds of maps:
\begin{enumerate}
\item
{\bf The edge contraction summands.}
If $(n-1,s^\prime, L^\prime)$ is an edge contraction of $(n,s,L)$ at $q$, 
let 
$$\lambda_\rho:\Lambda(n,s,L)\r \Lambda(n-1,s^\prime, L^\prime)$$
be given by
$$e_i\mapsto e_{\rho(i)},$$ 
and let 
$$c_\rho:C_n\mapsto C_{n-1}$$
be defined by
$$f_j\mapsto f_{\rho(j)}.$$
Then we have
$$\begin{array}{l}
\displaystyle
\prod_{i} e_i\prod_j f_j\cdot\bigotimes_{i\in L} (x_i\in A)\otimes\bigotimes_{\begin{array}[t]{c}i\in\bsf{n}\smin L\\v_s(i)=k\end{array}}
(x_i\in \mathcal{C}(k))\\[8ex]
\displaystyle
\mapsto
\lambda_\rho(\frac{\partial}{\partial e_j}\prod_i e_i)
c_\rho(\prod_j f_j)\cdot\bigotimes_{i\in L^\prime} (y_i\in A)\otimes
\bigotimes_{\begin{array}[t]{c}i\in(\bsf{n-1})\smin L^\prime\\v_{s^\prime}(i)=\ell\end{array}}(y_i\in 
\mathcal{C}(\ell))
\end{array}
$$
where
$$y_i=x_{\tau_{s,j}(i)} \;\text{if $i\neq s(q)-1$},$$
and
$$y_{s(q)-1}=\gamma_\ell(x_q,x_{s_{q}})$$
if
$$s^{-1}(s(q))=\{q_1<\dots<q_m\},\; q_\ell=q.$$

\item
{\bf The leaf contraction summands.} If $(n^\prime, s^\prime, L^\prime)$ is a leaf
contraction of $(n,s,L)$ at $(p,r)$, 
let, again,
$$\lambda_\rho:\Lambda(n,s,L)\r \Lambda(n^\prime,s^\prime, L^\prime)$$
be given by
$$e_i\mapsto e_{\rho(i)},$$ 
and let 
$$c_\rho:C_n\mapsto C_{n^\prime}$$
be defined by
$$f_j\mapsto f_{\rho(j)}.$$
$$\begin{array}{l}
\displaystyle
\prod_{i} e_i\prod_j f_j\cdot
\bigotimes_{q\in L} (x_q\in A)\otimes\bigotimes_{\begin{array}[t]{c}q\in\bsf{n}\smin L\\v_s(q)=k\end{array}}
(x_q\in \mathcal{C}(k))\\[8ex]
\displaystyle
\mapsto
\lambda_\rho(\frac{\partial}{\partial e_j}\prod_i e_i)
c_\rho(\prod_j f_j)\cdot
\bigotimes_{q\in L^\prime} (y_q\in A)\otimes
\bigotimes_{\begin{array}[t]{c}q\in(\bsf{n^\prime})\smin L^\prime\\v_{s^\prime}(q)=\ell\end{array}}(y_q\in 
\mathcal{C}(\ell))
\end{array}
$$
where
$$y_q=x_{\tau_{s,i,j}(q)}\;\text{if $q\neq r$},$$
$$y_p=\theta(x_p,\dots,x_{r-1};x_r).$$

\item
{\bf The internal differential summands.} Denoting by $\partial$ the internal differential on $A$ and
$\mathcal{C}(k)$ for each $i\in \bsf{n}$, we have
$$\begin{array}{l}
\displaystyle
\prod_{\ell} e_\ell\prod_j f_j\cdot
\bigotimes_{q\in L} (x_q\in A)\otimes\bigotimes_{\begin{array}[t]{c}q\in\bsf{n}\smin L\\v_s(q)=k\end{array}}
(x_q\in \mathcal{C}(k))\\[8ex]
\displaystyle
\mapsto
f_i\prod_{\ell} e_\ell\prod_j f_j\cdot\bigotimes_{q\in L} (y_q\in A)\otimes
\bigotimes_{\begin{array}[t]{c}q\in(\bsf{n})\smin L\\v_{s}(q)=k\end{array}}(y_q\in 
\mathcal{C}(k))
\end{array}
$$
where
$$y_q=x_q\;\text{for $q\neq i$},$$
$$y_i=\partial(x_i).$$

\end{enumerate}

One readily verifies that the sum $d$ of all these maps $\widetilde{B}_\mathcal{C}(A)\r\widetilde{B}_\mathcal{C}(A)$
does, in fact, satisfy
$$d\circ d=0.$$ 
This, in effect, follows from the anticommutation of the operations $\partial/\partial e_i$, and left multiplication
by the generators $f_j$.

\vspace{3mm}
\begin{proposition}
\label{p1}
$$H(\widetilde{B}_{\mathcal{C}}(A),d)=0.$$
\end{proposition}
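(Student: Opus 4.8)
The plan is to prove acyclicity by producing an explicit contracting homotopy built from the operadic unit $1\in\mathcal{C}(1)$ — the standard ``extra degeneracy'' of an augmented bar construction. Write the differential as $d=d_{e}+d_{\ell}+d_{i}$, the sum of the edge-contraction, leaf-contraction, and internal-differential summands of the definition. First I would define a degree-raising operator $s\colon\widetilde{B}_{\mathcal{C}}(A)\to\widetilde{B}_{\mathcal{C}}(A)$ by \emph{grafting a univalent unit vertex at the root}: on the summand indexed by a decorated tree $(n,s,L)$ I form the tree $(n+1,s^{\prime},L)$ in which the original tree is the unique successor tree of a new root $n+1$ of valency $1$, decorated by $1\in\mathcal{C}(1)$. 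Combinatorially $s^{\prime}(n)=n+1$, so the old root $n$ becomes an internal non-root vertex; this introduces a fresh exterior generator $e_{n}$ (multiplying the $Det$-monomial), a fresh Clifford generator $f_{n+1}$, and the corresponding change of degree shift $[\epsilon(n+1)]=[\epsilon(n)+1]$. Thus $s$ raises homological degree by one, as a chain homotopy must.

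The key algebraic input is the unit axiom. Applying $d_{e}$ to $s(T)$ and contracting precisely the new edge computes $\gamma_{1}(x_{n},1)=\gamma(x_{n};1)=x_{n}$ (or, when the old root is a single leaf, the leaf contraction gives $\theta(a;1)=a$), so this one term returns the original summand $T$. Every other contraction of $s(T)$ takes place entirely inside the grafted copy of $T$, so it equals the graft of the corresponding contraction of $T$; hence $(d_{e}+d_{\ell})s(T)=\pm T\pm s\bigl((d_{e}+d_{\ell})T\bigr)$. Moreover $s$ commutes (up to sign) with the internal differential because the new decoration $1$ is a cycle, $\partial(1)=0$, so $d_{i}s(T)=\pm s(d_{i}T)$. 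Since $sd(T)=s(dT)$, the grafted contraction terms appearing in $ds(T)$ cancel against those in $sd(T)$, leaving $ds(T)+sd(T)=\pm T$. Therefore $ds+sd=\pm\mathrm{id}$, and $H(\widetilde{B}_{\mathcal{C}}(A),d)=0$.

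The main obstacle will be the signs: one must check that the coefficient of $s(dT)$ produced by $ds(T)$ is exactly opposite to that produced by $sd(T)$, possibly after correcting $s$ by a degree-dependent sign. This is precisely where the Clifford-plus-exterior formalism earns its keep, since the anticommutation of the operators $\partial/\partial e_{i}$ with left multiplication by the $f_{j}$ (the same feature that already gives $d\circ d=0$) forces the cancellations to be automatic; what remains is to track how grafting the new generators $e_{n}$ and $f_{n+1}$ permutes past the existing ones. I would also dispatch the degenerate cases separately: the root can lie in $L$ only when $n=0$, and single-leaf trees are absorbed by the leaf-contraction recovery $\theta(a;1)=a$, so $s$ is defined uniformly on every summand and the homotopy identity is verified summand-by-summand (a finite manipulation on each), sidestepping any convergence issue coming from the infinite direct sum. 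Should the direct verification of signs prove delicate, an equivalent route is to filter by internal degree, reducing $E^{0}$ to the purely combinatorial complex $(\widetilde{B}_{\mathcal{C}}(A),d_{e}+d_{\ell})$, run the identical grafting homotopy there, and conclude that the spectral sequence collapses.
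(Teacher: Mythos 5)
Your proposal is correct and is essentially the paper's own proof: the paper defines exactly the same contracting homotopy $h$, grafting a new root $n+1$ below the old root with $\overline{s}(n)=n+1$, decorated by the unit $1\in\mathcal{C}(1)$ (contributing the new exterior generator $e_n$), and concludes $dh+hd=\mathrm{Id}$ via the unit axiom, just as you argue. The only cosmetic discrepancy is that no fresh Clifford generator actually appears, since the unit has degree $0$ and so its Clifford exponent is $0$.
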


\begin{proof}
Define, for a tree $(n,s,L)$, a tree $(n+1,\overline{s},L)$ by
$$\overline{s}(i)=s(i)\;\text{for $i<n$,}$$
$$\overline{s}(n)=n+1.$$
Define a map of degree $1$
$$h:\widetilde{B}_{\mathcal{C}}(A)\r\widetilde{B}_{\mathcal{C}}(A)$$
by
$$\begin{array}{l}
\displaystyle
\prod_i e_i \cdot \prod_j f_j\cdot\bigotimes_{j\in L} (x_j\in A)\otimes\bigotimes_{\begin{array}[t]{c}j\in\bsf{n}\smin L\\v_s(j)=k\end{array}}
(x_j\in \mathcal{C}(k))\\[8ex]
\displaystyle
\mapsto
e_{n}\cdot \prod_i e_i \cdot \prod_j f_j\cdot\bigotimes_{j\in \overline{L}} (y_j\in A)\otimes
\bigotimes_{\begin{array}[t]{c}j\in(\bsf{n+1})\smin {L}\\v_{\overline{s}}(j)=k\end{array}}(y_j\in 
\mathcal{C}(k))
\end{array}
$$
where
$$y_{j}=x_j\; \text{for $j\in \bsf{n}$},$$
$$y_{n+1}=1.
$$
(Note that the tree $(n+1,\overline{s},\overline{L})$ is obtained from $(n,s,L)$ by ``grafting'',
i.e. by attaching a new root below the old root, and connecting them with an edge. The labels
of the ``grafter tree'' stay the same, the label of the new root is $n+1$.)

Then 
$$dh+hd=Id.$$
\end{proof}

\vspace{3mm}
Let
\beg{e+}{
\diagram
A\rto^(.35)\subseteq & \widetilde{B}_{\mathcal{C}}(A)\\
a \rto |<\stop &a\in \displaystyle \bigoplus_{(1,*,\{1\})} A.
\enddiagram
}
Denote
\beg{e+*}{B_\mathcal{C}(A):=(\widetilde{B}_{\mathcal{C}}(A)/A)[-1].
}
Then there is a map
\beg{e++}{\mu:B_{\mathcal{C}}(A)\r A
}
given by
$$\begin{array}{l}
\displaystyle
\bigotimes_{j\in L} (x_j\in A)\otimes\bigotimes_{\begin{array}[t]{c}j\in\bsf{n}\smin L\\v_s(j)=k\end{array}}
(x_j\in \mathcal{C}(k))\\[8ex]
\displaystyle
\mapsto
\theta(x_1,\dots,x_{n-1};x_n) \;\text{if $(n,s,L)$ is a bush}\\[2ex]
\displaystyle
\mapsto 0 \; \text{else.}
\end{array}
$$

\vspace{3mm}
\begin{proposition}
\label{p*}
(1) The map \rref{e++} is a chain map, and an equivalence for an $\Sigma$-cofibrant DG $K$-module
operad $\mathcal{C}$. 

(2)
There is a natural DG $\mathcal{C}$-algebra structure on $B_{\mathcal{C}}(A)$ such that
\rref{e++} is a map of DG $\mathcal{C}$-algebras.

\end{proposition}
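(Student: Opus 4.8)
The plan is to handle the two assertions separately, in each case reducing the content to the operad-algebra axioms of $A$ together with the acyclicity of Proposition \ref{p1}, and leaving the signs to the Clifford/determinant formalism of Section \ref{sbar}. For the chain-map half of (1) I would verify $\mu d = \pm\partial\mu$ (with $\partial$ the internal differential of $A$) summand by summand on trees. Since $\mu$ is supported on bushes, the only summands of $d$ that can contribute are those whose output has a bush component. If $(n,s,L)$ is itself a bush, it has a single internal vertex, so no edge can be contracted, the leaf-contraction to the trivial tree lands in the copy $A\subseteq\widetilde{B}_{\mathcal{C}}(A)$ and dies in the quotient $\widetilde{B}_{\mathcal{C}}(A)/A$, and only the internal-differential summands survive; applying $\theta$ reproduces exactly the Leibniz expansion of $\partial\,\theta(x_1,\dots,x_{n-1};x_n)$, using that $\theta$ is a chain map. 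If $(n,s,L)$ is not a bush, a single contraction can yield a bush only when $(n,s,L)$ has exactly two internal vertices, namely the root and one vertex above it carrying only leaves; for such a tree the edge contraction of the upper internal edge (which composes the two operations by $\gamma$) and the leaf contraction at the upper vertex (which applies $\theta$ to its leaves) produce the very same bush, and the associativity axiom $\theta(\gamma(\cdots))=\theta(\theta(\cdots))$ makes the two values equal. The point of the Clifford/determinant bookkeeping is precisely that the two terms then carry opposite signs and cancel, so $\mu d$ vanishes on non-bushes, and $\mu$ is a chain map.

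For the equivalence in (1), I would invoke Proposition \ref{p1}. The short exact sequence $0\to A\to\widetilde{B}_{\mathcal{C}}(A)\to\widetilde{B}_{\mathcal{C}}(A)/A\to 0$ together with $H(\widetilde{B}_{\mathcal{C}}(A),d)=0$ forces the connecting homomorphism $\delta\colon H_{n}(\widetilde{B}_{\mathcal{C}}(A)/A)\to H_{n-1}(A)$ to be an isomorphism. After the shift in $B_{\mathcal{C}}(A)=(\widetilde{B}_{\mathcal{C}}(A)/A)[-1]$ of \rref{e+*}, the map induced by $\mu$ has the same source $H_{n+1}(\widetilde{B}_{\mathcal{C}}(A)/A)$, target $H_{n}(A)$ and degree as $\delta$, so it suffices to identify the two. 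Given a cycle $\bar z$ in the quotient, I lift it to a $z\in\widetilde{B}_{\mathcal{C}}(A)$ with no trivial-tree component; then $dz$ lies in $A$, and its value is exactly the leaf-contraction-to-the-trivial-tree applied to the bush components of $z$, which is $\mu(\bar z)$ by the definition \rref{e++}. Hence $\mu$ induces $\pm\delta$ and is an equivalence. (Note that the explicit contracting homotopy of Proposition \ref{p1} already yields this, so the $\Sigma$-cofibrancy of $\mathcal{C}$ enters here only as the standing hypothesis of the section.)

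For (2), I would use the successor-tree decomposition of Lemma \ref{l1} to identify $\widetilde{B}_{\mathcal{C}}(A)\cong A\oplus C(\widetilde{B}_{\mathcal{C}}(A))$, where $C$ is the monad \rref{emonad}: a non-trivial tree is precisely a root label in some $\mathcal{C}(m)$ together with its $m$ successor trees. Passing to the quotient and shifting gives $B_{\mathcal{C}}(A)\cong C(\widetilde{B}_{\mathcal{C}}(A))[-1]$, which I equip with the free $\mathcal{C}$-algebra structure, i.e. $\theta_B$ is the monad multiplication $\mu_C\colon CC\to C$ (graft a new root $c\in\mathcal{C}(m)$ below the $m$ arguments and compose it with their roots by $\gamma$); the associativity, unit and equivariance axioms hold because they hold for $\mu_C$, and the per-summand shifts $[\epsilon_{(n,s,L)}(n)]$ built into $\widetilde{B}_{\mathcal{C}}(A)$, which count internal vertices, are exactly what renders this grafting degree- and sign-compatible after the global $[-1]$. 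That $d$ is a derivation for $\theta_B$ follows because every edge contraction, leaf contraction and internal differential of $\theta_B(b_1,\dots,b_m;c)$ is local to a single argument $b_i$ except for the interaction of the new root $c$ with the roots of the $b_i$, and that interaction is governed by the associativity of $\gamma$ and the fact that $\gamma$ is a chain map, i.e. by the same mechanism as in (1). Finally, under this identification $\mu=\theta\circ C(\mathrm{pr}_A)$, with $\mathrm{pr}_A\colon\widetilde{B}_{\mathcal{C}}(A)\to A$ the projection; using naturality of $\mu_C$ with respect to $\mathrm{pr}_A$, the required identity $\mu\circ\theta_B=\theta\circ C(\mu)$ collapses to the associativity axiom $\theta\circ\mu_C=\theta\circ C(\theta)$ of the $\mathcal{C}$-algebra $A$, so $\mu$ is a map of DG $\mathcal{C}$-algebras.

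The main obstacle throughout is not the conceptual content, which is just the operad-algebra associativity of $A$ and the acyclicity of Proposition \ref{p1}, but sign consistency: one must check that the determinant lines $Det(n,s,L)$ and the Clifford generators $f_i$, together with the global shift $[-1]$ and the internal-vertex shifts, assign compatible signs so that $d$ is simultaneously a differential and a derivation, and so that the cancellations in the chain-map computation are genuinely sign-reversing. This is precisely what the Clifford-algebra device of Section \ref{sbar} was introduced to automate, so I expect these verifications to be mechanical once that device is invoked.
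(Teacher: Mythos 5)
Your argument is correct, and its mathematical core is the same as the paper's: the chain-map property reduces to the cancellation, via the associativity axiom relating $\gamma$ and $\theta$, of the single edge-contraction and single leaf-contraction summand on trees with exactly two internal vertices (the paper phrases this as vanishing on differentials of trees with $\epsilon_{(n,s,L)}(n)=2$); the equivalence rests on the acyclicity of Proposition \ref{p1}; and the algebra structure on $B_{\mathcal{C}}(A)$ is grafting onto a new root labelled by the $\gamma$-composite of the old root labels. The packaging differs in two places. For the equivalence, the paper compares the sequence $A[-1]\to\widetilde{B}_{\mathcal{C}}(A)[-1]\to B_{\mathcal{C}}(A)$ with $A[-1]\to A\otimes I[-1]\to A$ and applies the 5-lemma, whereas you identify $\mu_*$ with the connecting homomorphism of the long exact sequence; these are interchangeable. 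For (2), the paper simply writes the structure map as an explicit formula in the exterior/Clifford generators and verifies nothing further, whereas you recognize it as the monad multiplication under the identification of $B_{\mathcal{C}}(A)$ with $C\widetilde{B}_{\mathcal{C}}(A)$; this is the same map, and your formulation buys an actual verification: the derivation property and the identity $\mu\circ\theta_B=\theta\circ C(\mu)$ become formal consequences of naturality of $\mu_C$, of $\gamma$ being a chain map, and of $\theta\circ\mu_C=\theta\circ C(\theta)$. It also anticipates Proposition \ref{p**} and the quasi-free interpretation discussed in Section \ref{sd}. Two slips, neither fatal: the edge contraction and the leaf contraction of a two-internal-vertex tree do not produce ``the very same bush'' (their arities differ when the upper vertex carries more than one leaf) --- what is true, and what you then correctly use, is that $\mu$ takes equal values on the two by associativity; and your displayed identification should read $B_{\mathcal{C}}(A)\cong C\widetilde{B}_{\mathcal{C}}(A)$ with no extra $[-1]$, since the per-tree shifts give the non-trivial-tree part of $\widetilde{B}_{\mathcal{C}}(A)$ an extra $[1]$ from the new root, which the global shift in \rref{e+*} exactly cancels.
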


\begin{proof}
For (1), to prove that \rref{e++} is a chain map, it suffices to prove that it vanishes on differentials
of trees $(n,s,L)$ where $\epsilon_{(n,s,L)}(n)=2$. On such trees, however, the differential has
two summands (one edge contraction and one leaf contraction), which cancel after applying
\rref{e++} to them. 

By definition, further, \rref{e++} fits into the following diagram of chain complexes:
\beg{ep*i}{
\diagram
A[-1]\dto^{Id}\rto & \widetilde{B}_{\mathcal{C}}(A)[-1]\dto^{\widetilde{\mu}}\rto 
& B_\mathcal{C}(A)\dto^\mu\\
A[-1]\rto & A\otimes I[-1]\rto & A
\enddiagram
}
where $I$ is the chain complex of $K$-modules
$$\diagram
K\rto^\cong & K
\enddiagram
$$
in dimensions $1,0$. 

The map $\widetilde{\mu}$ has
$$a\in \bigoplus_{(1,*,\{1\})} A\mapsto a.$$
Since the source and target of $\widetilde{\mu}$ are both acyclic,
$\widetilde{\mu}$ is an equivalence, so
$\mu$ is an equivalence by \rref{ep*i} and the $5$-lemma.

The $\mathcal{C}$-algebra structure on $B_\mathcal{C}(A)$ is given by
$$\begin{array}{l}
\displaystyle
\prod_i e_{i,q}\prod_{j<n_q} f_{j,q}\cdot \prod_{q=1}^m f{n_q,q}\cdot\\[2ex]
\displaystyle
\bigotimes_{q=1}^m
\left(
\bigotimes_{j\in L_q} (x_{j,q}\in A)\otimes\bigotimes_{\begin{array}[t]{c}j\in\bsf{n_q}\smin L_q\\v_s(j)=k\end{array}}
(x_{j,q}\in \mathcal{C}(k))\right)\otimes (x\in \mathcal{C}(m))
\\[8ex]
\displaystyle
\mapsto
\prod_q  (\prod_i e_{\ell_q(i)}\cdot \prod_{j<n_q} f_{\ell_q(j)}\cdot f_n\cdot
\bigotimes_{j\in {L}} (y_j\in A)\otimes
\bigotimes_{\begin{array}[t]{c}j\in\bsf{n}\smin L\\v_{{s}}(j)=k\end{array}}(y_j\in 
\mathcal{C}(k))
\end{array}
$$
where the left had side indicates a typical element of 
$$(B_\mathcal{C}(A))^{\otimes m}\otimes \mathcal{C}(m),$$
with $e_{i,q}$, $f_{j,q}$ denoting the exterior and Clifford elements assigned to the $q$'th $B_\mathcal{C}(A)$ 
factor, $(n_q,s_q,L_q)$ are successor trees of a tree $(n,s,L)$, and we write
$$\ell_q(i)=i+(n_1-1)+\dots+(n_{q-1}-1),$$
$$n=\sum_{q=1}^{m} n_q-m+1,$$
for $s_q(j)\neq n_q$,
$$
s(\ell_q(j))=\ell_q(s_q(j)),$$
for $s_q(j)=n_q$,
$$s(\ell_q(j))=n,$$
for $1\leq j<n_q$,
$$y_{\ell_1(j)}=x_{j,q},$$
$$y_n=\gamma(x_{n,1},\dots,x_{n,q};x).$$
\end{proof}

\vspace{3mm}
\section{D-structures}
\label{sd}

\begin{proposition}
\label{p**}
There exists a graded homomorphism of $K$-modules
$$\bigoplus_{n\geq 0} \delta_n=\delta:\widetilde{B}_{\mathcal{C}}(A)\r
\bigoplus_{n\geq 0} \widetilde{B}_{\mathcal{C}}(A)^{\otimes n}\otimes_{\Sigma_n}\mathcal{C}(n)=B_\mathcal{C}(A)$$
such that if we denote, for $x\in \widetilde{B}_\mathcal{C}(A)$,
$$\delta(x)=\sum (\delta^{\prime}_n(x))_1\otimes\dots\otimes (\delta_n^\prime(x))_n\otimes \delta_n^{\prime
\prime}(x))$$
with $(\delta_{n}^\prime(x))_i\in \widetilde{B}_{\mathcal{C}}(A)$, $\delta_n^{\prime\prime}(x)\in \mathcal{C}(n)$,
then
$$\begin{array}{l}
d(x_1\otimes\dots\otimes x_n\otimes x_{n+1})\\[2ex]
=\displaystyle \sum_{i=1}^{n+1}(-1)^{|x_1|+\dots |x_{i-1}|}
x_1\otimes\dots\otimes x_{i-1} \otimes d x_i \otimes x_{i+1}
\otimes \dots \otimes x_{n+1}\\[4ex]
\displaystyle +\sum_{i=1}^{n}\sum_{m\geq 0}
(-1)^{|x_1|+\dots+|x_i|+|\delta_m^{\prime\prime}(x_i)|\cdot (|x_{i+1}|+\dots +|x_n|)}x_1\otimes\dots\\[4ex]
\dots x_i\otimes (\delta_m^\prime(x_i))_1\otimes\dots\otimes (\delta^\prime_m(x_i))_m
\otimes x_{i+1}\otimes \dots\\[2ex]
\dots \otimes x_n\otimes \gamma_i(\delta^{\prime\prime}_n(x_i),x_{n+1}).
\end{array}$$
\end{proposition}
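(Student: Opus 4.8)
The plan is to observe that the target of $\delta$ is exactly the monad $C$ of \rref{emonad} evaluated on $\widetilde{B}_{\mathcal{C}}(A)$, and that $\delta$ is then forced to be the canonical ``decomposition at the root''. By Lemma \ref{l1}, a tree $(n,s,L)$ with $n\notin L$ is the same datum as its sequence of successor trees $(n_1,s_1,L_1),\dots,(n_m,s_m,L_m)$ together with the label $x_{n+1}\in\mathcal{C}(m)$ attached to the root $n$ of valency $m$. Under the congruence $\sim$ of Section \ref{sbar}, permuting the successor trees corresponds precisely to the $\Sigma_m$-action, so this datum determines a well-defined element of $\widetilde{B}_{\mathcal{C}}(A)^{\otimes m}\otimes_{\Sigma_m}\mathcal{C}(m)$; the single-leaf trees form the copy of $A$ divided out in \rref{e+*}, and the fact that the root contributes one extra non-leaf vertex, so that $\epsilon_{(n,s,L)}(n)=1+\sum_i\epsilon_{(n_i,s_i,L_i)}(n_i)$, accounts for the shift $[-1]$. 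This yields a natural identification $B_{\mathcal{C}}(A)\cong\bigoplus_m\widetilde{B}_{\mathcal{C}}(A)^{\otimes m}\otimes_{\Sigma_m}\mathcal{C}(m)=C(\widetilde{B}_{\mathcal{C}}(A))$. I then define $\delta_m(x)$ to be the root-valency-$m$ component of this decomposition, with $(\delta_m^\prime(x))_j$ the successor-tree factors and $\delta_m^{\prime\prime}(x)\in\mathcal{C}(m)$ the root label, and $\delta(x)=0$ when $x\in A$ is a single leaf.

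With $\delta$ so defined, I would prove the displayed formula by sorting the three families of summands of the differential of $\widetilde{B}_{\mathcal{C}}(A)$ (the edge-contraction, leaf-contraction, and internal-differential summands of Section \ref{sbar}) according to whether they meet the root of the indexing tree. Every summand internal to a fixed successor tree $T_i$ --- an internal differential on a label of $T_i$, an edge contraction along an edge of $T_i$, or a leaf contraction of a bush inside $T_i$ --- reassembles into the full bar differential of the $i$-th tensor factor, producing the term $x_1\otimes\dots\otimes dx_i\otimes\dots$ of the first sum; the internal differential of the root label $x_{n+1}\in\mathcal{C}(n)$ supplies the final term $i=n+1$. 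The edge contractions along the edges joining the root of a successor tree $T_i$ to the global root are exactly the second sum: contracting such an edge merges the two incident operad labels by $\gamma_i$, giving $\gamma_i(\delta_m^{\prime\prime}(x_i),x_{n+1})$, and splices the successor trees of $T_i$ into slot $i$, replacing $x_i$ by $(\delta_m^\prime(x_i))_1\otimes\dots\otimes(\delta_m^\prime(x_i))_m$; here the arities match, since $\gamma_i(\delta_m^{\prime\prime}(x_i),x_{n+1})\in\mathcal{C}(n+m-1)$ is exactly the number of surviving tensor slots. Such a contraction needs the root of $T_i$ to be a non-leaf vertex, so it contributes nothing when $x_i\in A$, in agreement with $\delta(x_i)=0$ there. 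Finally, a leaf contraction whose bush sits at the global root can occur only when the whole tree is a bush, and it then produces a single-leaf tree lying in the summand $A$ divided out; this is precisely what makes the differential descend to the quotient \rref{e+*}, and why the $\theta$-evaluation $\mu$ of \rref{e++} does not appear in $d$.

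The main obstacle is the consistency of signs, which is where the $Det$/Clifford apparatus of Section \ref{sbar} does the real work. In the first sum the Koszul sign $(-1)^{|x_1|+\dots+|x_{i-1}|}$ is read off from left multiplication by the Clifford generator attached to the differentiated factor, commuted past the generators recording $x_1,\dots,x_{i-1}$. In the second sum the sign factors into two pieces: extracting the exterior generator $e_{k_i}$ of the contracted root edge via $\partial/\partial e_{k_i}$ and moving it to the front of the ordered product accounts for $(-1)^{|x_1|+\dots+|x_i|}$, while commuting the operad label $\delta_m^{\prime\prime}(x_i)$ past $x_{i+1},\dots,x_n$ to reach its operadic slot accounts for $(-1)^{|\delta_m^{\prime\prime}(x_i)|\cdot(|x_{i+1}|+\dots+|x_n|)}$. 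Because the determinant line is functorial and the Clifford generators anticommute and square to $1$, these signs are produced automatically and without ambiguity, so the only remaining task is the termwise check that they match the formula --- direct but tedious. I expect this sign verification, rather than any conceptual point, to be the sole genuine difficulty: the identification $B_{\mathcal{C}}(A)\cong C(\widetilde{B}_{\mathcal{C}}(A))$ and the definition of $\delta$ are formal consequences of Lemma \ref{l1}.
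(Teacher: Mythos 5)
Your proposal is correct and follows essentially the same route as the paper: the paper's proof also defines $\delta$ to be zero on the single-leaf summand $A$ and, on an element indexed by a tree with non-leaf root, to be the decomposition into successor trees tensored with the root label in $\mathcal{C}(k)$, with the signs absorbed by the shuffle of the $e_\ell$'s and $f_j$'s and by moving the root label to the right to apply $\gamma_i$. Your more detailed sorting of the edge-contraction, leaf-contraction, and internal-differential summands (including the observation that a root leaf contraction only occurs for bushes and dies in the quotient defining $B_{\mathcal{C}}(A)$) is exactly the verification the paper leaves implicit.
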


\begin{proof}
For an element $x\in\widetilde{B}_\mathcal{C}(A)$ indexed over a tree of the form $(1,*,\{1\})$
(thus, $x\in A$), we put
$$\delta(x)=0.$$
For elements indexed over a tree $(n,s,L)$ with $n\notin L$ with
successor trees $(n_i,s_i,L_i)$, $i=1,\dots, k$ (note that the case $n=1$ may occur, in
which case we have $k=0$), define
$$\begin{array}{l}
\displaystyle
\delta\left(\prod_\ell e_\ell\cdot \prod_j f_j\cdot\bigotimes_{j\in L} (x_j\in A)\otimes\bigotimes_{\begin{array}[t]{c}j\in\bsf{n}\smin L\\v_s(j)=k\end{array}}
(x_j\in \mathcal{C}(k))\right)\\[10ex]
\displaystyle
=\prod_\ell e_\ell\cdot \prod_j f_j\cdot\bigotimes_{i=1}^{k}\left(
\bigotimes_{j=n_1+\dots+n_{i-1}+1}^{n_1+\dots+n_i}x_j
\right)\otimes (x_n\in \mathcal{C}(k)).
\end{array}
$$
Note carefully that this formula hides a sign coming from the shuffle of the $e_\ell$'s and $f_j$'s so that the
variables corresponding to the individual $\widetilde{B}_{\mathcal{C}}(A)$ factors on the right
hand side are moved together.
The sign in the second summand of the differential comes from the fact that the root of the
trees corresponding to the $i$'th factor must be moved to the right to apply the operad operation. 
\end{proof}

\vspace{3mm}
\begin{definition}
\label{d2}
{\em Let $(\mathcal{C},\partial)$ be a unital $\Sigma$-cofibrant DG-operad over a field $K$. Define
$$CN=\bigoplus_{n\geq 0} N^{\otimes n}\otimes_{\Sigma_n}\mathcal{C}(n).$$
A {\em D-structure}
with respect to $\mathcal{C}$ is a DG-$K$-module $(N,d)$ together with a graded homomorphism
of $K$-modules
$$\delta:N\r CN,$$
$$x\mapsto\sum_ n ((\delta^{\prime}_n(x))_1\otimes\dots\otimes (\delta_n^\prime(x))_n\otimes \delta_n^{\prime
\prime}(x)),$$
such that 
the map $\Delta:CN\r CN$ given by
\beg{ed2*}{
\begin{array}{l}
\Delta(x_1\otimes\dots\otimes x_n\otimes x_{n+1})\\[2ex]
=\displaystyle \sum_{i=1}^{n+1}(-1)^{|x_1|+\dots |x_{i-1}|}
x_1\otimes\dots\otimes x_{i-1} \otimes d x_i \otimes x_{i+1}
\otimes \dots \otimes x_{n+1}\\[4ex]
\displaystyle +\sum_{i=1}^{n}\sum_{m\geq 0}
(-1)^{|x_1|+\dots+|x_i|+|\delta_m^{\prime\prime}(x_i)|\cdot (|x_{i+1}|+\dots +|x_n|)}x_1\otimes\dots\\[4ex]
\dots x_i\otimes (\delta_m^\prime(x_i))_1\otimes\dots\otimes (\delta^\prime_m(x_i))_m
\otimes x_{i+1}\otimes \dots\\[2ex]
\dots \otimes x_n\otimes \gamma_i(\delta^{\prime\prime}_n(x_i),x_{n+1}).
\end{array}
}
defines a differential on $CN$ (i.e., $\Delta\circ\Delta=0$), and furthermore, $(CN,\Delta)$,
with the $\mathcal{C}$-algebra structure on $CN$ in the category of graded $K$-modules
coming from the monad,
is a DG-$\mathcal{C}$-algebra.
}
\end{definition}

\vspace{3mm}
For our purposes, the concept of a D-structure is motivated by the following Proposition. In the case of 
the operad defining $A$-modules over an algebra $A$, it was also used in \cite{o}. For further motivation,
see Comment 2 before Definition 13 below.

\begin{proposition}
\label{pbc}
$(\widetilde{B}_{\mathcal{C}}(A),\delta)$ is a $\mathcal{C}$-D-structure.
\end{proposition}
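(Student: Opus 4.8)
The plan is to obtain the statement as a direct consequence of Propositions \ref{p**} and \ref{p*}, using the identification of $C\widetilde{B}_{\mathcal{C}}(A)$ with the bar construction by ``grafting''. Write $N=\widetilde{B}_{\mathcal{C}}(A)$ with its bar differential $d$. Among the conditions of Definition \ref{d2}, that $(N,d)$ is a DG-$K$-module is recorded in Section \ref{sbar} (where $d\circ d=0$ is verified) and that $\delta$ is a graded homomorphism $N\to CN$ is the content of Proposition \ref{p**}; so the substantive points are that the operator $\Delta$ of \rref{ed2*} satisfies $\Delta\circ\Delta=0$ and that $(CN,\Delta)$ is a DG-$\mathcal{C}$-algebra for the monad structure. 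The governing observation is that \rref{ed2*} is, term for term, the formula computed in Proposition \ref{p**} for the differential on $CN$ when $\delta$ is the map built there; it therefore suffices to reinterpret that differential geometrically.

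First I would make the grafting identification precise. For a tree $(n,s,L)$ with $n\notin L$ and successor trees $(n_1,s_1,L_1),\dots,(n_m,s_m,L_m)$, an element indexed by $(n,s,L)$ is exactly the datum of $m$ elements indexed by the successor trees together with the root operation in $\mathcal{C}(m)$; by the description of $\sim$ in Section \ref{st} this assembles into a degree-preserving isomorphism of graded $K$-modules
$$G:CN=\bigoplus_{m\geq 0} N^{\otimes m}\otimes_{\Sigma_m}\mathcal{C}(m)\xrightarrow{\ \cong\ }B_{\mathcal{C}}(A),$$
the trees with non-leaf root spanning a complement of $A$ in $\widetilde{B}_{\mathcal{C}}(A)$, and the shift $[-1]$ of \rref{e+*} compensating exactly the internal vertex introduced by the new root. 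Since $A\subseteq\widetilde{B}_{\mathcal{C}}(A)$ is a subcomplex (on $A$ the bar differential is only the internal differential), $B_{\mathcal{C}}(A)$ carries an induced differential $d_B$ with $d_B\circ d_B=0$ inherited from $d\circ d=0$. Tracing the three families of summands of $d$ through $G$ gives $d_B\circ G=G\circ\Delta$: the internal-differential summands and the contractions lying inside the successor trees reproduce the first sum of \rref{ed2*}; the contractions of the edges meeting the root produce, via $\delta$, the second sum; and the only remaining summand of $d$, the leaf contraction at the root $x_1\otimes\dots\otimes x_n\otimes c\mapsto\theta(x_1,\dots,x_n;c)$ (nonzero only when every $x_i$ lies in $A$), lands in $A$ and is already divided out in $B_{\mathcal{C}}(A)$. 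As $G$ is an isomorphism and $d_B\circ d_B=0$, this forces $\Delta\circ\Delta=0$.

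It remains to see that $(CN,\Delta)$ is a DG-$\mathcal{C}$-algebra. Under $G$ the free (monad) $\mathcal{C}$-algebra structure on $CN$ corresponds to the grafting operation on $B_{\mathcal{C}}(A)$, which is precisely the $\mathcal{C}$-algebra structure written out in the proof of Proposition \ref{p*}, while $\Delta$ corresponds to $d_B$. Proposition \ref{p*}(2) asserts that this structure makes $B_{\mathcal{C}}(A)$ a DG-$\mathcal{C}$-algebra, i.e. that $d_B$ is a derivation for the grafting operations; transporting back along the isomorphism $G$ shows that $\Delta$ is a derivation for the monad operations, so $(CN,\Delta)$ is a DG-$\mathcal{C}$-algebra. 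Together with the preceding paragraph this verifies every clause of Definition \ref{d2}.

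The step I expect to be the main obstacle is the identity $d_B\circ G=G\circ\Delta$, and in particular its signs: one must match the Koszul sign $(-1)^{|x_1|+\dots+|x_{i-1}|}$ and the more elaborate sign in the second sum of \rref{ed2*} against the signs produced when the exterior generators $e_\ell$ and the Clifford generators $f_j$ attached to the individual successor subtrees are shuffled so that the data of each tensor factor are grouped together. This is exactly the shuffle sign already flagged in the proof of Proposition \ref{p**}; granting that computation, the rest is a formal transport of structure along $G$.
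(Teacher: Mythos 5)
Your proposal is correct and follows essentially the paper's own route: the paper's proof consists of citing Proposition \ref{p**}, whose content is exactly your identity $d_B\circ G=G\circ\Delta$ under the root-decomposition isomorphism $CN\cong B_{\mathcal{C}}(A)$, with Proposition \ref{p*}(2) supplying the DG-$\mathcal{C}$-algebra clause just as in your final paragraph. Your write-up merely makes explicit the transport of structure (and the disposal of the root leaf-contraction term into the quotient by $A$) that the paper declares ``immediate.''
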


\begin{proof}
This follows immediately from Proposition \ref{p**}.
\end{proof}

\vspace{3mm}

\noindent
{\bf Comment:} Denote, for a $\mathcal{C}$-D-structure $(N,\Delta)$, by $C_\Delta(N)$
the DG-$\mathcal{C}$-algebra $(CN,\Delta)$. Note that the explicit notation is justified, as 
$CN$ has its own $\mathcal{C}$-algebra structure coming from the fact that $C$ is the
monad defining $\mathcal{C}$-algebras. Recall also that the unit $\eta:N\r CN$ is
a chain map.

The map $\eta:N\r C_\Delta(N)$, on the other hand, is not a chain map. In fact, it is
easily seen that the definition implies
\beg{ed2+}{
\eta d+\delta=\Delta\eta,
}
which in turn implies that
$$x\mapsto (-1)^{|x|}\delta(x)$$
is a chain map $\delta^\prime:N\r C_\Delta(N)$. Further, this map is null-homotopic by
\rref{ed2+}. In the case of $N=\widetilde{B}_\mathcal{C}(A)$, $\delta^\prime$ is
the projection 
$$\widetilde{B}_{\mathcal{C}}(A)\r B_\mathcal{C}(A)[1]$$
of \rref{e+*}. In view of Proposition \ref{p1}, then, one may ask if every D-structure has
$0$ homology. We will see that this is not the case. (By Comment 2 after Definition \ref{d3}, the 
differential $d$ on $N$ can be $0$.)

\vspace{3mm}
\begin{definition}
\label{d3}
{\em 
A {\em morphism of $\mathcal{C}$-D-structures} $f:(N,\Delta)\r (N^\prime,\Delta^\prime)$ is a homomorphism
of graded $K$-modules
$$f_0:N\r C_{\Delta^\prime}(N^\prime)$$
which extends to a homomorphism of DG-$\mathcal{C}$-algebras $\overline{f}:C_\Delta(N)
\r C_{\Delta^\prime}(N^\prime)$ in the sense of the following diagram:
$$\diagram
N\dto_\eta\drto^{f_0} &\\
C_\Delta(N)\rdotted|>\tip_{\overline{f}} &C_{\Delta^\prime}(N^\prime).
\enddiagram
$$
The {\em identity morphism} is $\eta$. For $g:(N^\prime,\Delta^\prime)\r (N^{\prime\prime},
\Delta^{\prime\prime})$, composition is defined by
$$(g\circ f)_0=\overline{g}\overline{f}\eta.$$
}
\end{definition}

\vspace{3mm}
\noindent
{\bf Comments:}
1. Note that $\overline{f}$ is uniquely determined as a map of $\mathcal{C}$-algebras because
$C_\Delta(N)$ is equal to $CN$ as a graded $K$-module and $\eta:N\r CN$ is a universal
homomorphism of graded $K$-modules into $\mathcal{C}$-algebras in the category of
graded $K$-modules.  Thus, the condition on $f_0$ reduces to requiring that $\overline{f}$
be a chain map. This can be written explicitly as follows: For 
$$x_1\otimes\dots \otimes x_n\otimes x_{n+1}\in N^{\otimes n}\otimes\mathcal{C}(n),$$
one requires that
$$
\begin{array}{l}
\displaystyle \sum_{i=1}^{n}(-1)^{|x_1|+\dots |x_{i-1}|}
f_0(x_1)\otimes\dots\otimes f_0(x_{i-1}) \otimes df_0( x_i) \otimes f_0(x_{i+1})
\otimes \dots \otimes f_0( x_{n})\otimes x_{n+1}
\\[4ex]
\displaystyle +\sum_{i=1}^{n}\sum_{m\geq 0}
(-1)^{|x_1|+\dots+|x_i|+|\delta_m^{\prime\prime}(x_i)|\cdot (|x_{i+1}|+\dots +|x_n|)}f_0(x_1)\otimes\dots\\[4ex]
\dots f_0(x_i)\otimes (\delta_m^\prime(f_0(x_i)))_1\otimes\dots\otimes (\delta^\prime_m(f_0(x_i)))_m
\otimes f_0(x_{i+1})\otimes \dots\\[2ex]
\dots \otimes f_0(x_n)\otimes \gamma_i(\delta^{\prime\prime}_n(f_0(x_i)),x_{n+1})\\[2ex]
=\displaystyle \sum_{i=1}^{n}(-1)^{|x_1|+\dots |x_{i-1}|}
f_0(x_1)\otimes\dots\otimes f_0(x_{i-1}) \otimes f_0( d x_i) \otimes f_0(x_{i+1})
\otimes \dots \otimes f_0(x_n) \otimes x_{n+1}\\[4ex]
\displaystyle +\sum_{i=1}^{n}\sum_{m\geq 0}
(-1)^{|x_1|+\dots+|x_i|+|\delta_m^{\prime\prime}(x_i)|\cdot (|x_{i+1}|+\dots +|x_n|)}f_0(x_1)\otimes\dots\\[4ex]
\dots f_0(x_i)\otimes f_0((\delta_m^\prime(x_i))_1)\otimes\dots\otimes f_0((\delta^\prime_m(x_i))_m)
\otimes f_0(x_{i+1})\otimes \dots\\[2ex]
\dots \otimes f_0(x_n)\otimes \gamma_i(\delta^{\prime\prime}_n(x_i),x_{n+1}).
\end{array}
$$

2. It follows immediately from \rref{ed2*} and \rref{ed2+} that the category of $\mathcal{C}$-$D$-structures
is in fact equivalent to the full subcategory of the category of $\mathcal{C}$-algebras on objects which
are free $\mathcal{C}$-algebras after forgetting differentials. Such objects are called {\em quasi-free} or
{\em almost free} algebras and were introducted, in the case of
augmented unital operads, by Getzler and Jones
\cite{gjones2} and further investigated by Fresse \cite{fresse}. Using this, one can deduce
the augmented case of the results of 
the present paper from \cite{gjones2,fresse}. The authors do not know if this was historically 
recognized as a restatement of non-unital (or, equivalently, augmented unital)
Koszul duality. Definition \ref{d2} generalizes the form 
in which $D$-structures first arose in geometry \cite{o}, where it was also first observed that, at least in that
special case, augmentation is not needed to make the arguments, and that this is
unital Koszul duality.

\vspace{3mm}

\begin{definition}
\label{d3}
{\em A morphism $f:(N,\Delta)\r (N^\prime,\Delta^\prime)$ of $\mathcal{C}$-D-structures is an {\em equivalence}
if the homomorphism $\overline{f}:C_\Delta(N)\r C_{\Delta^\prime}(N^\prime)$ induces an
isomorphism in homology.
}
\end{definition}

\vspace{3mm}

\begin{proposition}
\label{p+}
Let $\mathcal{C}$ be an $\Sigma$-cofibrant DG $K$-module operad.
The following functors preserve equivalences:
\beg{ep*1}{\widetilde{B}_{\mathcal{C}}(?):\text{DG-$\mathcal{C}$-algebras}\r\text{$\mathcal{C}$-D-structures}}
\beg{ep*2}{C_\Delta(?):\text{$\mathcal{C}$-D-structures}\r\text{DG-$\mathcal{C}$-algebras.}}
There exist natural equivalences
\beg{ep*3}{C_\Delta(\widetilde{B}_\mathcal{C}(?))\r Id:\text{DG-$\mathcal{C}$-algebras}
\r \text{DG-$\mathcal{C}$-algebras}
}
\beg{ep*4}{\widetilde{B}_\mathcal{C}(C_\Delta(?))\r Id:
\text{$\mathcal{C}$-D-structures}
\r \text{$\mathcal{C}$-D-structures}.
}
\end{proposition}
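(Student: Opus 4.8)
The plan is to deduce all four claims from Proposition \ref{p*}, from the fact that equivalence of $\mathcal{C}$-$D$-structures is \emph{defined} through the functor $C_\Delta$, and from one point-set identification. The first step is to recognize the DG-$\mathcal{C}$-algebra $C_\Delta(\widetilde{B}_\mathcal{C}(A))$: as a graded $K$-module it is the free $\mathcal{C}$-algebra $C\widetilde{B}_\mathcal{C}(A)$, and its differential $\Delta$ is the one built from the $D$-structure $\delta$ of Proposition \ref{pbc} via the formula of Definition \ref{d2}. Since that formula is, by construction, the formula for the differential computed in Proposition \ref{p**}, this differential agrees with the one making $B_\mathcal{C}(A)$ a DG-$\mathcal{C}$-algebra in Proposition \ref{p*}(2). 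With this identification, the natural transformation \rref{ep*3} is precisely the map $\mu$ of \rref{e++}, which is a map of DG-$\mathcal{C}$-algebras and an equivalence by Proposition \ref{p*}; its naturality in $A$ is immediate because $\mu$ is defined through the structure map $\theta$ of $A$ on bushes and vanishes elsewhere. This establishes \rref{ep*3}.

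Statement \rref{ep*2} is then tautological: a morphism $f$ of $\mathcal{C}$-$D$-structures is by definition an equivalence exactly when $\overline{f}=C_\Delta(f)$ is an equivalence of DG-$\mathcal{C}$-algebras, so $C_\Delta$ preserves (and in fact reflects) equivalences. For \rref{ep*1}, let $g\colon A\to A'$ be an equivalence of DG-$\mathcal{C}$-algebras. Functoriality of $\widetilde{B}_\mathcal{C}$ gives a strict map of $D$-structures $\widetilde{B}_\mathcal{C}(g)$ (it commutes with $d$ and with $\delta$ because $g$ commutes with $\partial$ and with $\theta$), and applying $C_\Delta$ to it yields $B_\mathcal{C}(g)$. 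By the naturality of $\mu$ just noted, the square
\[
\begin{array}{ccc}
B_{\mathcal{C}}(A) & \stackrel{\mu_A}{\longrightarrow} & A \\
\downarrow & & \downarrow \\
B_{\mathcal{C}}(A') & \stackrel{\mu_{A'}}{\longrightarrow} & A'
\end{array}
\]
commutes; as $\mu_A$, $\mu_{A'}$ and $g$ are equivalences, the two-out-of-three property forces $B_\mathcal{C}(g)=C_\Delta(\widetilde{B}_\mathcal{C}(g))$ to be an equivalence, whence $\widetilde{B}_\mathcal{C}(g)$ is an equivalence of $D$-structures.

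For \rref{ep*4}, fix a $\mathcal{C}$-$D$-structure $N$ and set $A=C_\Delta(N)$. I define the comparison $\phi\colon\widetilde{B}_\mathcal{C}(C_\Delta(N))\to N$ to be the morphism of $D$-structures with defining datum $\phi_0=\mu_A\circ\eta\colon\widetilde{B}_\mathcal{C}(A)\to A=C_\Delta(N)$. Because $\mu_A$ is itself a map of DG-$\mathcal{C}$-algebras, the unique $\mathcal{C}$-algebra extension of $\phi_0$ is $\overline{\phi}=\mu_A$, a chain map by Proposition \ref{p*}; hence $\phi_0$ satisfies the chain condition required of a morphism and $\phi$ is well defined. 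Since $C_\Delta(\phi)=\overline{\phi}=\mu_A\colon C_\Delta(\widetilde{B}_\mathcal{C}(A))=B_\mathcal{C}(A)\to A=C_\Delta(N)$ is an equivalence by Proposition \ref{p*}, the morphism $\phi$ is an equivalence of $D$-structures by definition. Thus a single construction simultaneously supplies the transformation \rref{ep*4} and shows each of its components is an equivalence.

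The remaining, and only genuinely delicate, task is the naturality of $\phi$ in $N$, and this is where I expect the real care to be needed: a morphism $h\colon N\to N'$ of $\mathcal{C}$-$D$-structures is not a strict map but the twisted datum $h_0\colon N\to C_{\Delta'}(N')$ with composition law $(g\circ h)_0=\overline{g}\,\overline{h}\,\eta$, so commutativity of the naturality square cannot be read off directly but must be unwound through this law. The clean way around this is to note that $C_\Delta$ is faithful, since any morphism $f$ is recovered from $\overline{f}=C_\Delta(f)$ via $f_0=\overline{f}\eta$; it then suffices to compare $C_\Delta$ of the two composites $\widetilde{B}_\mathcal{C}(C_\Delta(N))\to N'$. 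Using $C_\Delta(\widetilde{B}_\mathcal{C}(\overline{h}))=B_\mathcal{C}(\overline{h})$ and the naturality square of $\mu$ for the map $\overline{h}=C_\Delta(h)\colon A\to A'$, both composites give $\overline{h}\circ\mu_A$, so faithfulness forces the square to commute. The differential-matching identification of the first paragraph and this naturality verification are the two points where the bookkeeping of the monad structure and of the Clifford signs must be handled honestly; everything else is a formal consequence of Proposition \ref{p*} together with the definition of equivalence of $D$-structures through $C_\Delta$.
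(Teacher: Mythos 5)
Your proof is correct and takes essentially the same route as the paper: identify $C_\Delta\widetilde{B}_{\mathcal{C}}(A)=B_{\mathcal{C}}(A)$ via Proposition \ref{p**}, obtain \rref{ep*3} from the natural equivalence $\mu$ of Proposition \ref{p*}, note that \rref{ep*2} preserves equivalences by definition and that \rref{ep*1} then follows from the definition of equivalence of $D$-structures, and construct \rref{ep*4} by taking $\overline{\epsilon}$ to be the equivalence of Proposition \ref{p*} applied to $C_\Delta(N)$. The only difference is one of detail: you make explicit the two-out-of-three argument for \rref{ep*1} and the faithfulness-of-$C_\Delta$ verification of naturality for \rref{ep*4}, both of which the paper leaves implicit.
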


\begin{proof}
We have $C_\Delta\widetilde{B}_{\mathcal{C}}(A)=B_\mathcal{C}(A)$ by Proposition \ref{p**},
so the natural equivalence of DG-$\mathcal{C}$-algebras \rref{ep*3} is established by 
Proposition \ref{p*}. Since equivalences of $\mathcal{C}$-D-structures
$f:(N,\Delta)\r(N^\prime,\Delta^\prime)$ are defined as equivalences of the corresponding
DG-$\mathcal{C}$-algebras, this also implies that \rref{ep*1} preserves equivalences.

The functor \rref{ep*2} on morphisms is defined by $f\mapsto \overline{f}$. It preserves
equivalences by definition. 

To construct the natural equivalence $\epsilon$ of \rref{ep*4}, again, we need to construct
$\overline{\epsilon}$, i.e. a natural equivalence of DG-$\mathcal{C}$-algebras
$$B_\mathcal{C}(C_\Delta(?))\r C_{\Delta}(?),$$
which is the natural equivalence of Proposition \ref{p*} applied to $C_\Delta$.
\end{proof}

\vspace{3mm}

\section{Derived categories}
\label{sder}

The material covered in this section is well known. We cover it for the lack of convenient
reference.

\begin{definition}
\label{ddd}
Let $C$ be a category, and let $\mathcal{E}\subseteq Mor(C)$ be an
arbitrary class of morphism, which we call {\em equivalences}. A {\em derived category}
$DC$ is a category together with a functor 
$$\Phi:C\r DC$$
such that 
$$\phi\in \mathcal{E}\Rightarrow \text{$\Phi(\phi)$ is an isomorphism},$$
and for every functor $F:C\r B$ such that 
$$\phi\in \mathcal{E}\Rightarrow \text{$F(\phi)$ is an isomorphism},$$
there exists a functor $DF:DC\r D$ and a natural isomorphism
$$\diagram \eta:F\rto^\cong & DF\circ\Phi,
\enddiagram
$$
which is further unique in the following sense: For any other functor
$DF^\prime:DC\r D$ and a natural isomorphism
$$\diagram \eta^\prime:F\rto^\cong & DF^\prime\circ\Phi,
\enddiagram
$$
there exists a unique natural isomorphism 
$$\diagram \xi:DF\rto^\cong & DF^\prime
\enddiagram$$
such that
\beg{eddd+}{
\xi(\Phi)=\eta^\prime\circ \eta^{-1}.
}
\end{definition}

\vspace{3mm}
Note that the existence of a derived category is not automatic, and in particular cannot be
proved by the usual algebraic ``localization'' argument because $\mathcal{E}$ is only a class,
not necessarily a set. In certain cases a derived category is known to exist, for example
when $\mathcal{E}$ is the class of equivalences in a Quillen model structure \cite{q}.

\vspace{3mm}
\begin{proposition}
\label{puni}
If a derived category exists then it is unique in the sense that for two functors $\Phi:C\r DC$,
$\Phi^\prime:C\r D^\prime C$ both satisfying the condition of Definition \ref{ddd},
there exists a natural equivalence 
\beg{epuni1}{A:DC\r D^\prime C,\; B:D^\prime C\r DC,
}
\beg{epuni2}{\diagram \epsilon:Id \rto^\cong & BA,
\enddiagram\;
\diagram \zeta:Id \rto^\cong & AB
\enddiagram
}
and natural isomorphisms
\beg{epuni3}{\diagram \eta:\Phi^\prime\rto^\cong &A\Phi,
\enddiagram\;
\diagram \kappa:\Phi \rto^\cong & B\circ \Phi^\prime
\enddiagram
}
such that
\beg{epuni4}{(B\eta)\circ\kappa=\epsilon\Phi,\; (A\kappa)\circ \eta=\zeta\Phi^\prime.
}
Furthermore, these data are unique in the following sense: For any other choice of the
data \rref{epuni1}, \rref{epuni2} satisfying \rref{epuni3}, \rref{epuni4} (which we will decorate
with a $(?)^\circ$ to make a distinction), there are unique isomorphisms
$$\diagram \alpha:A\rto^\cong &A^\circ,
\enddiagram\;
\diagram \beta:B\rto^\cong & B\circ B^\circ
\enddiagram
$$
such that
$$Id=(\zeta^\circ)^{-1}\circ(\beta A^\circ)\circ (B\alpha)\circ\zeta,$$
$$Id=(\epsilon^\circ)^{-1}\circ (\alpha B^\circ)\circ (A\beta)\circ \epsilon,$$
$$\eta^\circ=\alpha \Phi\circ\eta,\; \kappa^\circ=\beta\Phi^\prime \circ \kappa.$$
\end{proposition}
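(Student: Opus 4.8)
The plan is to run the standard argument that an object defined by a universal property is unique up to unique isomorphism, carried out one categorical level up: since the defining property in Definition \ref{ddd} is itself $2$-categorical (it produces not merely a factorization $DF$ but a natural isomorphism $\eta$, together with a uniqueness clause for it), both the equivalence data and their uniqueness will be forced by repeated use of that uniqueness clause. The conceptual engine is a rigidity principle read directly off Definition \ref{ddd}: a natural isomorphism between functors out of $DC$ is uniquely determined by its restriction along $\Phi$, and any prescribed natural isomorphism $DF\circ\Phi\cong DF^\prime\circ\Phi$ extends uniquely over all of $DC$. The whole proof is bookkeeping around this fact, with no homotopy-theoretic input.

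First I would produce the functors. Because $\Phi^\prime$ sends equivalences to isomorphisms (the first clause of Definition \ref{ddd} for $D^\prime C$), the universal property of $DC$ applied to $F=\Phi^\prime$ yields a functor $A:DC\r D^\prime C$ and a natural isomorphism $\eta:\Phi^\prime\cong A\Phi$ as in \rref{epuni3}; symmetrically the universal property of $D^\prime C$ applied to $\Phi$ yields $B:D^\prime C\r DC$ together with $\kappa:\Phi\cong B\Phi^\prime$. To construct $\epsilon$ of \rref{epuni2} I compare the two functors $Id_{DC}$ and $BA$ from $DC$ to itself against $F=\Phi$: the identity gives $Id\circ\Phi=\Phi$ with the trivial natural isomorphism, while whiskering gives the natural isomorphism $(B\eta)\circ\kappa:\Phi\cong BA\Phi$. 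The uniqueness clause of Definition \ref{ddd} then produces a unique natural isomorphism $\epsilon:Id\cong BA$ whose restriction along $\Phi$ equals $(B\eta)\circ\kappa$, which is exactly the first identity of \rref{epuni4}. The transformation $\zeta$ is built the same way from $(A\kappa)\circ\eta$, giving the second identity of \rref{epuni4}. Since $\epsilon$ and $\zeta$ are isomorphisms, $A$ and $B$ are mutually inverse equivalences, establishing \rref{epuni1}.

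For the uniqueness assertion, suppose a second system $A^\circ,B^\circ,\epsilon^\circ,\zeta^\circ,\eta^\circ,\kappa^\circ$ satisfies the same conditions. Both $A$ and $A^\circ$ are factorizations of $\Phi^\prime$ through $DC$ carrying the natural isomorphisms $\eta$ and $\eta^\circ$, so the uniqueness clause applied to $F=\Phi^\prime$ yields a unique natural isomorphism $\alpha:A\cong A^\circ$ with $\alpha\Phi\circ\eta=\eta^\circ$, which is the asserted relation. Symmetrically one obtains the unique $\beta:B\cong B^\circ$ with $\kappa^\circ=\beta\Phi^\prime\circ\kappa$. It then remains to check the coherence identities relating $\alpha,\beta$ to $\zeta,\zeta^\circ$ and to $\epsilon,\epsilon^\circ$. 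I would verify these by restricting both sides along $\Phi$ (resp. $\Phi^\prime$): feeding \rref{epuni4} for both systems into the defining equations $\alpha\Phi\circ\eta=\eta^\circ$ and $\kappa^\circ=\beta\Phi^\prime\circ\kappa$, the two whiskered composites become equal on the image of $\Phi$, and the rigidity principle upgrades this to equality of natural transformations on all of $DC$ (resp. $D^\prime C$).

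The argument is entirely formal, so the only genuine difficulty is bookkeeping: matching each invocation of the uniqueness clause to the correct pair of functors, and keeping straight the whiskering composites and their variances in \rref{epuni4} and in the final coherence identities. The step I would check most slowly is the twofold use of the rigidity principle, first to manufacture the global transformations $\epsilon,\zeta,\alpha,\beta$ from prescribed values on $\Phi$, and then to promote the coherence identities from the image of $\Phi$ to all of $DC$; everything else rests on the $2$-categorical content already packaged into Definition \ref{ddd}.
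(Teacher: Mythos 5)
Your proposal is correct and follows essentially the same route as the paper's (much terser) proof: $\eta,\kappa$ come from the existence clause of Definition \ref{ddd}, while $\epsilon,\zeta,\alpha,\beta$ and all the stated identities come from the uniqueness clause (the $\xi$ of Definition \ref{ddd}) together with \rref{eddd+}, exactly as you organize it via your ``rigidity principle.'' Your write-up simply makes explicit the whiskering computations and the restriction-along-$\Phi$ arguments that the paper leaves to the reader.
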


\begin{proof}
One obtains the transformations $\eta,\kappa$ as special cases of the $\eta$ in
Definition \ref{ddd}, and the $\epsilon, \zeta,\alpha, \beta$ as special cases of the
$\xi$ of Definition \ref{ddd}. The diagrams are then special cases of \rref{eddd+} and
the uniqueness of $\xi$ in Definition \ref{ddd}.
\end{proof}

\vspace{3mm}
\begin{lemma}
\label{lx}
Suppose $C_1$, $C_2$ are categories with classes of equivalences $\mathcal{E}_1$,
$\mathcal{E}_2$. Suppose that

(1) $(C_1,\mathcal{E}_1)$ has a derived category $\Phi_1:C_1\r DC_1$.

(2) There exists a pair of functors $F:C_1\r C_2$, $G:C_2\r C_1$ which preserve equivalences,
and natural equivalences 
$$\diagram \epsilon:FG\rto^\sim &Id,
\enddiagram\;
\diagram \zeta:GF\rto^\sim & Id.
\enddiagram
$$
Then $C_2$ has a derived category $\Phi_2:C_2\r DC_2$, and there exist functors
$$DF:DC_1\r DC_2,\; DG:DC_2\r DC_1$$
and natural isomorphisms
$$\diagram D\epsilon:DFDG\rto^(.6)\cong &Id,
\enddiagram\;
\diagram D\zeta:DGDF\rto^(.6)\cong &Id,
\enddiagram
$$
$$\diagram \kappa:\Phi_2F\rto^\cong &DF\Phi,
\enddiagram\;
\diagram\lambda:\Phi_1G\rto^\cong & DG\Phi_2
\enddiagram
$$
satisfying the identities
$$Id=D\zeta\circ(DG\kappa)\circ(\lambda F)\circ \epsilon^{-1},
$$
$$Id=D\epsilon \circ (DF\lambda)\circ (\kappa F)\circ \zeta^{-1}.$$
\end{lemma}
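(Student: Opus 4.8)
The plan is to realize a derived category of $(C_2,\mathcal{E}_2)$ on the \emph{same} underlying category as the given one for $C_1$. Concretely, I would put $DC_2:=DC_1$ and define $\Phi_2:=\Phi_1\circ G:C_2\r DC_1$. Because $G$ preserves equivalences we have $G(\mathcal{E}_2)\subseteq\mathcal{E}_1$, and $\Phi_1$ inverts $\mathcal{E}_1$, so $\Phi_2$ inverts $\mathcal{E}_2$ and is at least a candidate. All the real content is that this $\Phi_2$ satisfies the universal property of Definition \ref{ddd}; once that is known, the functors $DF,DG,D\epsilon,D\zeta,\kappa,\lambda$ are produced formally, exactly as in Proposition \ref{puni}.

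To verify the universal property efficiently I would first reformulate Definition \ref{ddd}. For any functor $\Psi:C\r DC$ inverting a class $\mathcal{E}$, and any target $B$, let $\Psi^{*}$ denote precomposition with $\Psi$, viewed as a functor from $\{$functors $DC\r B\}$ to $\{$functors $C\r B$ inverting $\mathcal{E}\}$ (it lands there since $\Psi$ inverts $\mathcal{E}$ and functors preserve isomorphisms). The two clauses of Definition \ref{ddd} say precisely that, for every $B$, the functor $\Psi^{*}$ restricts to an equivalence between the maximal subgroupoids (the isomorphisms only) of these two functor categories: the existence of $DF$ with $\eta:F\cong DF\Phi$ is essential surjectivity, and the existence of a unique $\xi$ with $\xi\Phi=\eta'\circ\eta^{-1}$ is full faithfulness on isomorphisms. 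Thus hypothesis (1) is exactly the statement that $\Phi_1^{*}$ is such an equivalence-on-isomorphisms for every $B$.

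Now I would factor $\Phi_2^{*}=G^{*}\circ\Phi_1^{*}$, where $G^{*}$ is precomposition with $G$ on the relevant functor categories; this is well defined because $K\circ G$ inverts $\mathcal{E}_2$ whenever $K$ inverts $\mathcal{E}_1$. The crucial observation is that $G^{*}$ is itself an equivalence of categories, with inverse $F^{*}=(-)\circ F$: for $K$ inverting $\mathcal{E}_1$ the components of $K(\zeta)$ lie in $K(\mathcal{E}_1)$ and hence are isomorphisms, giving a natural isomorphism $F^{*}G^{*}K=K\circ GF\cong K$, and symmetrically $F'(\epsilon)$ gives $G^{*}F^{*}F'=F'\circ FG\cong F'$; both are natural in $K$, respectively $F'$. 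Since a composite of equivalences-on-isomorphisms is again one, $\Phi_2^{*}=G^{*}\circ\Phi_1^{*}$ has the required property, so $\Phi_2$ exhibits $DC_1$ as a derived category of $(C_2,\mathcal{E}_2)$. For the remaining structure, I would apply the universal property of $\Phi_1$ to $\Phi_2 F:C_1\r DC_2$ (which inverts $\mathcal{E}_1$) to obtain $DF:DC_1\r DC_2$ and $\kappa:\Phi_2 F\cong DF\Phi_1$, and the just-proved universal property of $\Phi_2$ to $\Phi_1 G:C_2\r DC_1$ to obtain $DG$ and $\lambda:\Phi_1 G\cong DG\Phi_2$. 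Whiskering $\kappa$, $\lambda$ and $\Phi_1(\zeta)$ together yields an isomorphism $DGDF\,\Phi_1\cong\Phi_1$, and the uniqueness clause for $\Phi_1^{*}$ then produces a unique $D\zeta:DGDF\cong Id$ inducing it; symmetrically $\epsilon$ and the uniqueness clause for $\Phi_2^{*}$ produce $D\epsilon:DFDG\cong Id$. The two stated coherence identities hold because both of their sides induce the same isomorphism after whiskering with $\Phi_1$ (respectively $\Phi_2$), hence coincide by the uniqueness part of the universal property, just as in Proposition \ref{puni}.

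I expect the main obstacle to be the uniqueness clause of the universal property for $\Phi_2$. A direct hands-on route — transporting a competing pair $(DF'',\eta'')$ across $F$ and comparing via $\zeta$ — runs into a genuine coherence gap, since $\epsilon$ and $\zeta$ are only natural transformations with components in $\mathcal{E}$, not honest natural isomorphisms, and are not assumed to satisfy any triangle identity. The reformulation above sidesteps this entirely: the functor $G^{*}$ is an equivalence of categories for \emph{any} equivalence data $(\epsilon,\zeta)$, with no triangle identity required, and the uniqueness of $\xi$ is then automatic from the full faithfulness of an equivalence.
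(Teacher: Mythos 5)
Your proposal is correct, and it begins exactly where the paper's proof begins: the paper also sets $DC_2:=DC_1$ and $\Phi_2:=\Phi_1 G$. Beyond that, the two arguments diverge. The paper's proof is a four-line construction: it takes $DF=DG:=Id$, $\lambda:=Id$, $\kappa:=\Phi_1\zeta$ (a natural isomorphism since $\zeta$ has components in $\mathcal{E}_1$ and $\Phi_1$ inverts them), and obtains $D\zeta$, $D\epsilon$ as instances of the $\xi$ of Definition \ref{ddd}; it nowhere verifies that $\Phi_1 G$ satisfies the universal property of Definition \ref{ddd} for $(C_2,\mathcal{E}_2)$, which is the substantive content of ``$C_2$ has a derived category'' (and is needed even to make sense of $D\epsilon$, which is pinned down by whiskering with $\Phi_2$ rather than with $\Phi_1$). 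Your proof supplies precisely this verification, by a route the paper does not take: reformulating Definition \ref{ddd} as ``for every target $B$, precomposition with $\Phi$ is an equivalence on maximal subgroupoids of functor categories'', factoring $\Phi_2^*=G^*\circ\Phi_1^*$, and observing that $G^*$ is a genuine equivalence of categories with quasi-inverse $F^*$, since $K\zeta$ and $F'\epsilon$ become natural isomorphisms as soon as $K$ and $F'$ invert equivalences; no triangle identities are required, and as you note this is exactly the trap that a direct check of the uniqueness clause for $\Phi_2$ falls into (it would want $\zeta G=G\epsilon$). The paper's explicit choices buy economy --- with $DG=Id$, $\lambda=Id$, $\kappa=\Phi_1\zeta$ the displayed identities are nearly definitional --- while your argument buys completeness: the universal property of $\Phi_2$, which is what the lemma really asserts, is actually proved. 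Two cosmetic points: the identities in the statement only type-check after inserting $\Phi$'s (reading $\epsilon^{-1}$, $\zeta^{-1}$, $\kappa F$ as $(\Phi_1\zeta)^{-1}$, $(\Phi_2\epsilon)^{-1}$, $\kappa G$ respectively), which you in effect do; and since you define $D\zeta$, $D\epsilon$ by full faithfulness from those very composites, the two identities hold by construction, so your final whiskering-and-uniqueness remark is not even needed as a separate step.
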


\vspace{3mm}
\noindent
{\bf Comment:} We say that $F,G$ {\em induce an equivalence of derived categories}. There is also
a uniqueness statement which we omit.

\begin{proof}
Put $DC_2:=DC_1$, $\Phi_2:=\Phi_1 G$. 
We may then define $DF=DG:=Id$, and $\lambda:=Id$. To define
$\kappa:\Phi_1GF\r \Phi_1$, put $\kappa:=\Phi_1\zeta$. The natural isomorphisms
$D\zeta$, $D\epsilon$ are then defined as special cases of the $\xi$ of Definition \ref{ddd}.
\end{proof}

\vspace{3mm}
\noindent
{\bf Comment:}
The fact that we defined $DC_2:=DC_1$ does not affect the force of this Lemma, since
a derived category is only defined up to equivalence; the force of the statement is in
the functors and natural isomorphisms involved.

\vspace{5mm}
The following is the main result of this section.
\vspace{3mm}

\vspace{3mm}
\begin{theorem}
\label{t1}
Let $\mathcal{C}$ be an $\Sigma$-cofibrant DG $K$-module operad.
There exists a derived category of $\mathcal{C}$-D-structures, and the functors of Proposition
\ref{p+} induce an equivalence of derived categories.
\end{theorem}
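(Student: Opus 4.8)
The plan is to deduce Theorem \ref{t1} directly from Lemma \ref{lx}, using the functors and natural equivalences already assembled in Proposition \ref{p+}. The strategy is to take $C_1$ to be the category of DG-$\mathcal{C}$-algebras (whose equivalences $\mathcal{E}_1$ are the homology isomorphisms) and $C_2$ to be the category of $\mathcal{C}$-D-structures (whose equivalences $\mathcal{E}_2$ are, by Definition \ref{d3}, precisely those morphisms $f$ for which $\overline{f}$ is a homology isomorphism of the associated DG-$\mathcal{C}$-algebras). Then I set $F:=\widetilde{B}_\mathcal{C}(?)$ and $G:=C_\Delta(?)$. To invoke Lemma \ref{lx} I must verify its two hypotheses and then read off the conclusion.

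\medskip\noindent
\textbf{Verification of hypothesis (1).} I first need a derived category $\Phi_1:C_1\r DC_1$ of DG-$\mathcal{C}$-algebras. This is the Quillen derived category referenced in the theorem statement: DG-$\mathcal{C}$-algebras over a field $K$ (with $\mathcal{C}$ being $\Sigma$-cofibrant) carry a Quillen model structure whose weak equivalences are the homology isomorphisms, and by the remark following Definition \ref{ddd}, the class of equivalences in a Quillen model structure always admits a derived category. So hypothesis (1) holds with $\mathcal{E}_1$ the homology isomorphisms.

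\medskip\noindent
\textbf{Verification of hypothesis (2).} Here Proposition \ref{p+} does essentially all the work. That proposition states exactly that $F=\widetilde{B}_\mathcal{C}(?)$ and $G=C_\Delta(?)$ both preserve equivalences (formulas \rref{ep*1} and \rref{ep*2}), and it produces the two natural equivalences \rref{ep*3} and \rref{ep*4}, namely $C_\Delta(\widetilde{B}_\mathcal{C}(?))\r Id$ and $\widetilde{B}_\mathcal{C}(C_\Delta(?))\r Id$. In the notation of Lemma \ref{lx}, \rref{ep*3} is the natural equivalence $\epsilon:FG\r Id$ (noting $FG=C_\Delta\widetilde{B}_\mathcal{C}$ in the composition order of the lemma) and \rref{ep*4} is $\zeta:GF\r Id$. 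Thus both clauses of hypothesis (2) are satisfied verbatim by Proposition \ref{p+}.

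\medskip\noindent
\textbf{Conclusion and the main obstacle.} With both hypotheses in hand, Lemma \ref{lx} immediately yields a derived category $\Phi_2:C_2\r DC_2$ of $\mathcal{C}$-D-structures, together with functors $DF,DG$ and the coherent system of natural isomorphisms $D\epsilon,D\zeta,\kappa,\lambda$ satisfying the stated triangle identities; this is precisely the assertion that the functors of Proposition \ref{p+} induce an equivalence of derived categories, which is what Theorem \ref{t1} claims. The one genuinely substantive point — and the step I expect to be the main obstacle — is hypothesis (1), the existence of a derived category of DG-$\mathcal{C}$-algebras. Since the paper explicitly declines to construct a Quillen model structure on D-structures themselves (as noted in the introduction), the whole strategy depends on importing the existence of $DC_1$ from the well-established model structure on the algebra side; one must be careful that $\Sigma$-cofibrancy is exactly the hypothesis making this model structure (and hence $DC_1$) available, via Proposition \ref{psc} guaranteeing that the relevant monad preserves equivalences. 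Everything else is a formal consequence of Lemma \ref{lx}.
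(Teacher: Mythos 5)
Your proposal is correct and is essentially identical to the paper's own proof: the paper likewise deduces Theorem \ref{t1} as an immediate consequence of Proposition \ref{p+} and Lemma \ref{lx}, with the derived category of DG-$\mathcal{C}$-algebras supplied by the standard closed model structure (cofibrations being retracts of relative cell objects). The only quibble is that your labels $\epsilon$ and $\zeta$ are swapped relative to the composition convention of Lemma \ref{lx} (with $F=\widetilde{B}_\mathcal{C}$ and $G=C_\Delta$, the equivalence \rref{ep*4} is $\epsilon:FG\r Id$ and \rref{ep*3} is $\zeta:GF\r Id$), which is immaterial since the hypothesis requires both.
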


\begin{proof}
An immediate consequence of Proposition \ref{p+} and Lemma \ref{lx}; the derived category
of DG $\mathcal{C}$-algebras follows from the standard closed model structure (where cofibrations
are retracts of relative cell objects).
\end{proof}

\vspace{3mm}

\section{Appendix: The multi-sorted case}

In this apppendix, we will briefly explain what has to be changed in the definitions and theorems of
this paper to cover the case of multi-sorted operads (equivalently, multicategories, cf. Elmendorf, Mandell \cite{em}).

We are given a set $S$ (in the preceding sections, we assumed $S$ is a $1$-element set). An
{\em $S$-sorted operad} $\mathcal{C}$ over $K$-$Chain$ is defined in the same way as an operad over $K$-$Chain$, 
with $\mathcal(n)$
replaced by a system $\mathcal{C}(n)_{s_1,\dots,s_n;t}$ where $s_1,\dots, s_n,t$ range independently 
over all elements of $S$. For every $s\in S$, there is a unit
$$K\r \mathcal{C}(1)_{s;s}.$$
The operations \rref{eoperad1} become
$$\begin{array}{l}\displaystyle
\mathcal{C}(n_1)_{s_{1,1},\dots,s_{1,n_1};t_1}\otimes \dots
\otimes \mathcal{C}(n_k)_{s_{k,1},\dots,s_{k,n_k};t_1}\otimes
\mathcal {C}(k)_{t_1,\dots,t_k;u}\\[2ex]\displaystyle
\r \mathcal{C}(n_1+\dots +n_k)_{s_{1,1},\dots,s_{k,n_k};u}.
\end{array}$$
Similarly,
$$\gamma_j:\mathcal{C}(k)_{s_1,\dots,s_k;t_j}
\otimes \mathcal{C}(n)_{t_1,\dots,t_n;u}
\r \mathcal{C}(n+k-1)_{t_1,\dots,t_{j-1},s_1,\dots,s_k,
t_{j+1},\dots t_n;u}.$$
For an object $M$ of $(K$-$Chain)^S$ (the $S$-fold product
of categories), the $S$-sorted operad $\mathcal{H}_M$ is defined by
$$\mathcal{H}_M(n)_{s_1,\dots,s_n;t}=Hom(M_{s_1}\otimes\dots\otimes M_{s_n};M_t)$$
and for an $S$-sorted operad $\mathcal{C}$, a morphism of $S$-sorted operads \rref{eoperad1+} defines
an {\em $S$-sorted DG-$\mathcal{C}$-algebra} $M$. 

Accordingly, the monad $C$ in $(K$-$chain)^S$ corresponding to $\mathcal{C}$ is defined by
$$(CX)_t=\bigoplus_{n\geq 0}\bigoplus_{s_1,\dots,s_n\in S}
(X_{s_1}\otimes \dots \otimes X_{s_n})\otimes_{\Sigma_n} \mathcal{C}(n)_{s_1,\dots,s_n;t}.
$$
$\Sigma$-cofibrancy on an $S$-sorted operad is defined to mean that
$$\bigoplus_{(s_1,\dots,s_n)\in \mathcal{O}}\mathcal{C}(n)_{s_1,\dots,s_n;t}$$
be a $K[\Sigma_n]$-cell chain complex for every $t\in S$ and every $\Sigma_n$-orbit $\mathcal{O}$
of the product $S^n$.

Regarding trees, an {\em $S$-sorted tree $(n,s,L,f)$} is a tree $(n,S,L)$ with a map
$$f:\bsf{n}\r S.$$
An intertwining operation $\sigma:\bsf{n}\r\bsf{n}$ between $S$-sorted trees $(n,s,L,f)$
and $(n^\prime,s^\prime, L^\prime, f^\prime)$ must satisfy $f^\prime \sigma=f$. Similarly, $S$-sorted successor trees of
$(n,s,L,f)$ are $(n_i,s_i,L_i,f_i)$ where
$$f_i(j)=f(j+k_{i-1}).$$
All other definitions and statements regarding trees remain completely analogous for $S$-sorted trees.

Regarding the augmented Ginzburg-Kapranov bar construction, the definition \rref{esbar+} is modified by
replacing
$$\bigoplus_{(n,s,L)}$$
by 
$$\bigoplus_{(n,s,L,f)},$$
$\mathcal{C}(k)$ by $\mathcal{C}(k)_{f(i_1),\dots,f(i_k);f(i)}$ where
$$s(i_1)=\dots =s(i_k)=i$$
and $A$ by $A_{f(i)}$.

Thereby, $\widetilde{B}_{\mathcal{C}}(A)$ is an object of $(K$-$Chain)^S$ ($\widetilde{B}_{\mathcal{C}}(A)_s$
is the summand of the modified \rref{esbar+} over $f(n)=s$).

Propositions \ref{p1}, \ref{p*} translate readily to $(K$-$Chain)^S$. $B_\mathcal{C}(A)$ becomes an
$S$-sorted DG-$\mathcal{C}$-algebra. 

To define an $S$-sorted $D$-structure (Definition \ref{d2}), $\delta$ is an $S$-tuple of morphisms of 
graded $K$-modules, $\Delta$ is required to give $CN$ a structure of an $S$-sorted $\mathcal{C}$-algebra.
In Definition \ref{d3}, $f_0$ is an $S$-tuple of morphisms of graded $K$-modules, 
$\overline{f}$ is a morphism of $S$-sorted DG-$\mathcal{C}$-algebras. 

Again, in the spirit of \cite{gjones2}, the category of $S$-sorted $\mathcal{C}$-$D$-structures 
is equivalent to the full subcategory of the category of $\mathcal{C}$-algebras which are free $C$-algebras
after forgetting differentials.

Definition \ref{d3}, Proposition \ref{p+} and Theorem \ref{t1} remain precisely analogous, replacing 
DG-$\mathcal{C}$-algebras by $S$-sorted DG-$\mathcal{C}$-algebras.

\vspace{10mm}

\end{document}